\documentclass[12pt]{amsart}

\usepackage{amssymb, amsfonts}
\usepackage{url}
\usepackage[all,arc]{xy}
\usepackage{amscd}
\usepackage{mathrsfs}
\usepackage{geometry}
\usepackage[colorlinks,citecolor=blue]{hyperref}
\usepackage{comment}
\usepackage{enumerate}
\usepackage{graphicx}
\usepackage{bm}
\usepackage{color}
\usepackage{anyfontsize}
\usepackage{caption}

\numberwithin{equation}{section}

\theoremstyle{plain}
\newtheorem{theorem}{Theorem}[section]

\newtheorem{lemma}[theorem]{Lemma}
\newtheorem{proposition}[theorem]{Proposition}

\newtheorem{definition}[theorem]{Definition}

\theoremstyle{remark}
\newtheorem{remark}{Remark}[section]

\begin{document}


\title[On FBMS with boundary on concentric spheres]{On free boundary minimal submanifolds with boundary on concentric spheres in Euclidean space}

\author{Tianyu Ma, Vladimir Medvedev}

\address{Faculty of Mathematics, National Research University Higher School of Economics, 6 Usacheva Street, Moscow, 119048, Russian Federation}

\email{tma@hse.ru, vomedvedev@hse.ru}



\begin{abstract}
The search for free boundary minimal submanifolds in Euclidean space with boundaries on a collection of concentric spheres naturally extends the classical problem in the unit Euclidean ball. A key feature of this setting is that the coordinate functions of such submanifolds satisfy a \textit{Steklov problem with an indefinite weight}. This framework allows us to introduce a spectral index, which in turn yields both upper and lower bounds for the Morse index. As a concrete application, we compute the exact Morse index of an $m$-dimensional flat annulus in an $n$-dimensional spherical shell, showing that it equals $n-m$. Moreover, 
we study in detail free boundary minimal immersions from 2-dimensional annuli into Euclidean space whose boundaries lie on concentric spheres. We show that the images of these free boundary minimal immersions (FBMI) lie in an $m$-dimensional subspace with $2\leqslant m\leqslant 4$, and list the explicit forms of these FBMI. We also demonstrate how to find examples of FBMIs for which the ratio between the radii of the concentric spheres containing the boundaries is arbitrarily large.
 \end{abstract}

\maketitle


\newcommand\cont{\operatorname{cont}}
\newcommand\diff{\operatorname{diff}}

\newcommand{\dvol}{\text{dA}}
\newcommand{\Ric}{\operatorname{Ric}}
\newcommand{\Hess}{\operatorname{Hess}}
\newcommand{\GL}{\operatorname{GL}}
\newcommand{\myO}{\operatorname{O}}
\newcommand{\myP}{\operatorname{P}}
\newcommand{\eye}{\operatorname{Id}}
\newcommand{\myF}{\operatorname{F}}
\newcommand{\Vol}{\operatorname{Vol}}
\newcommand{\odd}{\operatorname{odd}}
\newcommand{\even}{\operatorname{even}}
\newcommand{\ol}{\overline}
\newcommand{\mye}{\operatorname{E}}
\newcommand{\myo}{\operatorname{o}}
\newcommand{\myt}{\operatorname{t}}
\newcommand{\irr}{\operatorname{Irr}}
\newcommand{\mydiv}{\operatorname{div}}
\newcommand{\curl}{\operatorname{curl}}
\newcommand{\re}{\operatorname{Re}}
\newcommand{\im}{\operatorname{Im}}
\newcommand{\can}{\operatorname{can}}
\newcommand{\scal}{\operatorname{scal}}
\newcommand{\tr}{\operatorname{trace}}
\newcommand{\sgn}{\operatorname{sgn}}
\newcommand{\SL}{\operatorname{SL}}
\newcommand{\myspan}{\operatorname{span}}
\newcommand{\mydet}{\operatorname{det}}
\newcommand{\SO}{\operatorname{SO}}
\newcommand{\SU}{\operatorname{SU}}
\newcommand{\specl}{\operatorname{spec_{\mathcal{L}}}}
\newcommand{\fix}{\operatorname{Fix}}
\newcommand{\id}{\operatorname{id}}
\newcommand{\grad}{\operatorname{grad}}
\newcommand{\singsup}{\operatorname{singsupp}}
\newcommand{\wave}{\operatorname{wave}}
\newcommand{\ind}{\operatorname{ind}}
\newcommand{\mynull}{\operatorname{null}}
\newcommand{\inj}{\operatorname{inj}}
\newcommand{\arcsinh}{\operatorname{arcsinh}}
\newcommand{\Spec}{\operatorname{Spec}}
\newcommand{\Ind}{\operatorname{Ind}}
\newcommand{\Nul}{\operatorname{Nul}}
\newcommand{\inrad}{\operatorname{inrad}}
\newcommand{\mult}{\operatorname{mult}}
\newcommand{\Length}{\operatorname{Length}}
\newcommand{\Area}{\operatorname{Area}}
\newcommand{\Ker}{\operatorname{Ker}}
\newcommand{\floor}[1]{\left \lfloor #1  \right \rfloor}

\newcommand\restr[2]{{
  \left.\kern-\nulldelimiterspace 
  #1 
  \vphantom{\big|} 
  \right|_{#2} 
  }}


\section{Introduction}

Since its introduction in the 1940s through the seminal works of Courant~\cite{courant1940existence,courant1945plateau}, the study of free boundary minimal submanifolds in a given Riemannian manifold has become a classical problem in geometric analysis. Recall that a submanifold with boundary $\Sigma$ in a Riemannian manifold $(M,h)$ with boundary is called \emph{free boundary minimal} if its mean curvature vector vanishes identically and $\Sigma$ meets $\partial M$ orthogonally along $\partial\Sigma$. Equivalently, $\Sigma$ is a critical point of the volume functional with respect to variations whose variation vector fields are tangent to $\partial M$ along $\partial\Sigma$. The specific case of free boundary minimal submanifolds in the unit ball $\mathbb B^n$ of Euclidean space $\mathbb E^n$ has garnered significant attention due to its profound connection to the spectral geometry of the Steklov eigenvalue problem (see, e.g.,~\cite{li2019free}, \cite[Chapter 1]{fraser2020geometric}, and the references therein). More recently, further results building upon this foundational connection have been established in~\cite{lima2023eigenvalue,medvedev2025free,ma2025some}, extending the study of free boundary minimal submanifolds to geodesic balls in spherical and hyperbolic spaces.

In this paper, we investigate a natural generalization of the free boundary minimal submanifold problem in the unit ball: we consider free boundary minimal submanifolds whose boundaries are constrained to lie on a collection of concentric spheres. This geometric setting has been previously explored, for instance, in~\cite{ambrozio2025intrinsic}.

We now state the precise problem. Let $\Phi \colon \Sigma \to \mathbb{E}^n$ be a free boundary minimal immersion (FBMI) whose boundary lies on a collection of concentric spheres with radii $R_\alpha$ ($\alpha=1,\ldots,A$) and $r_\beta$ ($\beta=1,\ldots,B$). We use capital letters $R_\alpha$ to denote the radii of the spheres where $\Sigma$ meets the boundary from the convex side, and lowercase letters $r_\beta$ for those where it meets the boundary from the concave side. By a slight abuse of notation, we identify $\Sigma$ with its image $\Phi(\Sigma)$.

Let $\partial\Sigma = \bigcup_\alpha \partial_+^\alpha\Sigma \cup \bigcup_\beta \partial_-^\beta\Sigma$, where $\bigcup_\alpha \partial_+^\alpha\Sigma$ denotes the union of connected components of $\partial\Sigma$ intersecting the spheres $\mathbb{S}^{n-1}(R_\alpha)$, and $\bigcup_\beta \partial_-^\beta\Sigma$ denotes the union of connected components intersecting the spheres $\mathbb{S}^{n-1}(r_\beta)$. It is straightforward to verify that the coordinate functions $\phi_i$ ($i=1,\ldots,n$) of the immersion satisfy the following boundary value problem:
\begin{align}
\label{eqn:general}
\begin{cases}
\Delta_g \phi_i = 0 & \text{in } \Sigma, \\[6pt]
\dfrac{\partial \phi_i}{\partial\eta} = -\dfrac{1}{r_\beta}\phi_i & \text{on } \partial_-^\beta\Sigma, \\[6pt]
\dfrac{\partial \phi_i}{\partial\eta} = \dfrac{1}{R_\alpha}\phi_i & \text{on } \partial_+^\alpha\Sigma,
\end{cases}
\end{align}
where $g$ is the metric induced on $\Sigma$ by the Euclidean metric, $\Delta_g = -\text{div}_g \nabla^g$ is the corresponding Laplace--Beltrami operator, and $\eta$ denotes the unit outward conormal vector field along $\partial\Sigma$.

Problem~\eqref{eqn:general} is a specific instance of the \textit{Steklov problem with an indefinite weight} (\cite{sandgren1955vibration,suslina1999asymptotics,suslina1999spectral,agranovich2006mixed}):
\begin{align}\label{eq:steklov_weight}
\begin{cases}
\Delta_g u = 0 & \text{in } \Sigma, \\
\dfrac{\partial u}{\partial \eta} = \rho \sigma u & \text{on } \partial\Sigma,
\end{cases}
\end{align}
where the weight function $\rho$, which is not identically zero, satisfies the integrability condition
\[
\int_{\partial\Sigma} |\rho| \log(2+|\rho|) \,ds_g < \infty.
\]
Under this assumption on $\rho$ (which we typically assume changes sign), the spectrum of this problem is discrete and consists of a sequence of eigenvalues (see \cite[Proposition 2.2]{karpukhin2023weyl} and \cite[Sections 2 and 3]{girouard2021continuity}):
\[
-\infty \swarrow \dots \leqslant \sigma^-_i \leqslant \dots \leqslant \sigma^-_1 < 0 = \sigma_0 < \sigma^+_1 \leqslant \dots \leqslant \sigma^+_i \leqslant \dots \nearrow +\infty,
\]
where we abbreviate $\sigma^\pm_i(\Sigma,g,\rho)$ to $\sigma^\pm_i$ when the manifold $(\Sigma,g)$ and weight $\rho$ are clear from the context. Furthermore, each eigenvalue has finite multiplicity.

In our setting, this weight function is explicitly given by
\begin{align}\label{rho}
\rho = \begin{cases}
\dfrac{1}{R_\alpha} & \text{on } \partial_+^\alpha\Sigma, \\[6pt]
-\dfrac{1}{r_\beta} & \text{on } \partial_-^\beta\Sigma.
\end{cases}
\end{align}

Motivated by analogous problems in the unit Euclidean ball, we seek examples such that $g$ is an $\mathbb{S}^1$-symmetric metric on the topological annulus. In \cite{fraser2011first}, the authors constructed $\mathbb{S}^1$-symmetric free boundary minimal annuli and Möbius bands, notably including the embedded critical catenoid and critical Möbius band. Our aim is to construct analogous examples with boundaries lying on concentric spheres. However, this restricts the number of boundary spheres: an annulus can have at most two boundary spheres, while a Möbius band can have only one. The case of a single boundary sphere corresponds to an FBMI inside a single ball, which has been extensively studied. Therefore, we focus on free boundary minimal annuli whose two boundary components lie on two distinct concentric spheres.

Since the coordinate functions $\phi_i$ of such an annulus $\Phi \colon \Sigma \to \mathbb{E}^n$ are harmonic, the squared norm $|\Phi|^2$ is subharmonic on $(\Sigma,g)$. By the maximum principle, $|\Phi|^2$ attains its maximum on the boundary $\partial\Sigma$. Let $\mathbb{S}^{n-1}(r)$ and $\mathbb{S}^{n-1}(R)$ (with $r<R$) be the two concentric spheres containing $\partial\Sigma$. The maximum principle implies that $\Sigma \subset \mathbb{B}^n(R)$, meaning $\Sigma$ must intersect the larger sphere $\mathbb{S}^{n-1}(R)$ from the inside (i.e., from the convex side). The smaller sphere, however, can be intersected by $\Sigma$ either from the inside or the outside, a geometric distinction encoded by a sign $\epsilon \in \{-1, 1\}$. (Note that this is a general phenomenon: any FBMI with boundary on a collection of concentric spheres must intersect the outermost sphere from the inside, i.e., from the convex side.)

Consequently, the coordinate functions satisfy the following boundary value problem:
\begin{align}
\label{eq:annulus}
\begin{cases}
\Delta_g \phi_i = 0 & \text{in } \Sigma, \\[6pt]
\dfrac{\partial \phi_i}{\partial\eta} = \dfrac{\epsilon}{r}\phi_i & \text{on } L_1, \\[6pt]
\dfrac{\partial \phi_i}{\partial\eta} = \dfrac{1}{R}\phi_i & \text{on } L_2,
\end{cases}
\end{align}
where $L_1$ and $L_2$ denote the boundary components of $\Sigma$ lying on $\mathbb{S}^{n-1}(r)$ and $\mathbb{S}^{n-1}(R)$, respectively.

Our first result, which builds upon the techniques introduced in \cite{fraser2011first,fan2015extremal}, is the following:
\begin{theorem}
\label{thm:fbmi cases}
Let $\Sigma=\mathbb{S}^1\times [0,T]$ be a standard flat annulus with its canonical metric $g_{0,T}$. For any $0<r\leqslant R$ and any rotationally symmetric metric $g$ conformal to $g_{0,T}$, suppose that there exists a minimal immersion $\Phi:(\Sigma^2,g) \rightarrow \mathbb{E}^{n}$ with free boundaries on the two concentric spheres $\mathbb{S}^{n-1}(r)$ and $\mathbb{S}^{n-1}(R)$. Then at least one component of $\partial\Sigma$ intersects one of the spheres (the larger sphere if $r<R$) from the inner side. Through an orthogonal transformation of the ambient space $\mathbb{E}^{n}$, we may assume that $2\leqslant n\leqslant 4$. Furthermore, we have the following.
\begin{itemize}
\item For $\epsilon=-1$, one component of $\partial\Sigma$ intersects $\mathbb{S}^{n-1}(r)$ from the outer side, and the other component intersects $\mathbb{S}^{n-1}(R)$ from the inner side with $r<R$. In this case, $\Phi$ is two-dimensional and is given by 
\begin{align}
\label{eqn:FBMI annulus 2-dim}
 \Phi(t,\theta)=\begin{pmatrix}
     f(t)\cos(k\theta)\\
     f(t) \sin(k\theta)
 \end{pmatrix},   
\end{align}
where $f(t)=a_k\cosh(kt)+b_k\sinh(kt)$ with $a_k,b_k\in\mathbb{R}$, $k\geqslant 1$ and $a_kb_k>0$.
\item For the case $\epsilon=1$, $\partial\Sigma$ intersects both spheres from the inner side. The image is three-dimensional or four-dimensional.
\begin{itemize}
    \item If zero-frequency eigenfunctions (linear functions in $t$) are in the span of the coordinate functions of $\Phi$, the image is three-dimensional. Through a similarity transformation of the ambient space $\mathbb{E}^{n}$, $\Phi$ has the form
    \begin{align}
    \label{eqn:FBMI annulus 3-dim}
      \Phi(t,\theta)=\begin{pmatrix}
          f(t) \cos(k\theta)\\
          f(t) \sin (k\theta)\\
          t+c_2
      \end{pmatrix}.  
    \end{align}
   \item If only eigenfunctions of positive frequencies are allowed to span the coordinate functions of $\Phi$, the image is four-dimensional. Through an orthogonal transformation of the ambient space $\mathbb{E}^{n}$, $\Phi$ has the form
   \begin{align}
   \label{eqn:FBMI annulus 4-dim case}
    \Phi(t,\theta)=\begin{pmatrix}
        f_1(t)\cos(k_1\theta)\\
        f_1(t)\sin(k_1\theta)\\
        f_2(t)\cos(k_2\theta)\\
        f_2(t)\sin (k_2\theta)
    \end{pmatrix},
    \end{align}
    where $f_i(t)=a_i\cosh(k_i t)+b_i\sinh(k_i t)$, for some $k_1,k_2\geqslant 1$ and $a_i,b_i\in\mathbb{R}$.
    \end{itemize}
\end{itemize}
\end{theorem}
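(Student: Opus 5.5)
We would follow the Fraser--Schoen separation-of-variables approach. Write $g=e^{2\omega(t)}(dt^2+d\theta^2)$ with $\omega$ depending only on $t$, by rotational symmetry. In dimension two the Laplacian is conformally covariant, so each $\phi_i$ is harmonic for the flat metric $g_{0,T}$. On $\partial\Sigma$ the conormal derivative rescales by $e^{-\omega}$, and $e^{\omega}$ is constant on each boundary circle. Hence problem \eqref{eq:annulus} becomes a flat Steklov-type problem
\[
-\partial_t\phi_i=\lambda_1\phi_i \ \text{at } t=0,\qquad \partial_t\phi_i=\lambda_2\phi_i \ \text{at } t=T,
\]
with $\lambda_1=\epsilon e^{\omega(0)}/r$ and $\lambda_2=e^{\omega(T)}/R>0$.

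We expand each $\phi_i$ in Fourier modes in $\theta$. The mode-$0$ solutions are $a+bt$, and the mode-$k$ solutions are $(a\cosh kt+b\sinh kt)(c\cos k\theta+d\sin k\theta)$. The boundary conditions form a $2\times2$ linear system in $(a,b)$. For fixed $(\lambda_1,\lambda_2,T)$, the existence of a nonzero solution at frequency $k$ imposes one determinant equation, namely
\[
\lambda_1\lambda_2\,\sinh(kT)\cosh(kT)\cdots=0,
\]
and more precisely $(k^2+\lambda_1\lambda_2)\tanh kT=k(\lambda_1+\lambda_2)$. For frequency $0$ the condition is $\lambda_1+\lambda_2=\lambda_1\lambda_2T$. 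The key monotonicity claim is that, for fixed parameters, this equation holds for at most one $k\geqslant1$. We would verify this by studying $k\mapsto (k^2+\lambda_1\lambda_2)\tanh(kT)/k$, expecting it to be strictly monotone or to have a single sign change in the relevant sign regimes. This is where the two cases separate. If $\epsilon=-1$ then $\lambda_1\lambda_2<0$, so the frequency-$0$ condition forces $\lambda_1+\lambda_2<0$, contradicting $\lambda_1+\lambda_2=\lambda_1\lambda_2 T$ combined with the signs, unless degenerate. Checking the sign carefully, we expect mode $0$ to be excluded when $\epsilon=-1$ and at most one positive frequency to survive. When $\epsilon=1$, at most one positive frequency $k_1$ survives together with possibly mode $0$. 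A second positive frequency $k_2$ can only survive in the case where mode $0$ is absent; we expect the determinant function to be non-monotone enough to admit two roots, but never three.

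Next we impose conformality, $|\Phi_t|^2=|\Phi_\theta|^2$ and $\Phi_t\cdot\Phi_\theta=0$. Using rotation invariance of the metric, the span of the coordinate functions is an $\mathbb S^1$-invariant space of eigenfunctions. For each frequency $k$ the eigenspace is two-dimensional, spanned by $f_k(t)\cos k\theta$ and $f_k(t)\sin k\theta$, and the mode-$0$ space is one-dimensional. So after an orthogonal change of coordinates, $\Phi$ lies in $\mathbb{E}^m$ with $m\leqslant 2+1$ or $m\leqslant 2+2$. Writing $\Phi=A\,v(t,\theta)$ with $v$ the vector of basis functions, conformality gives that $A^TA$ commutes with rotations, with $\theta$-independence of the relations. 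This forces $A^TA$ to be block-scalar on each frequency block and lets us put $\Phi$ in the forms \eqref{eqn:FBMI annulus 2-dim}, \eqref{eqn:FBMI annulus 3-dim}, and \eqref{eqn:FBMI annulus 4-dim case}, after scaling in the three-dimensional case. Here we use the standard Fraser--Schoen argument: the mixed terms in $|\Phi_t|^2-|\Phi_\theta|^2$ at frequencies $2k$ must vanish.

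Finally, the sign conditions finish the argument. For $\epsilon=-1$, $\Phi$ is planar, $|\Phi|=f$, and $f(0)=r<R=f(T)$ up to orientation. The condition $a_kb_k>0$ comes from $\lambda_1<0$, i.e. $f'(0)=ka_k\cdot(b_k/a_k)$ having the right sign, since $-f'(0)/f(0)=\lambda_1<0$ gives $b_k/a_k>0$. The statement that the outermost sphere is met from inside follows from subharmonicity of $|\Phi|^2$, as in the introduction. For $\epsilon=1$ with only mode $0$ and $k$ excluded, the image cannot be planar, because a planar $f\cos,f\sin$ map with both $\lambda_i>0$ would force $f$ convex positive with $f'(0)<0<f'(T)$ and $f(0)=f(T)$, giving $r=R$ or a nonconformal image. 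We expect the main obstacle to be the counting step that bounds the number of admissible frequencies, and the exclusion of mode $0$ for $\epsilon=-1$.
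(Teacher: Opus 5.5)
Your overall route is the paper's. You reduce conformally to a flat problem with Robin constants $\lambda_1=\sigma H_0$, $\lambda_2=\sigma H_T$, separate variables, count admissible frequencies, and then use $\theta$-independence of the induced metric to get the normal forms. Your determinant $(k^2+\lambda_1\lambda_2)\tanh kT=k(\lambda_1+\lambda_2)$ is just \eqref{eqn:quadratic eq steklov} rewritten. There is one step that fails, plus two claims you only conjecture.

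\textbf{The exclusion of mode $0$ for $\epsilon=-1$ fails as written.} The frequency-$0$ condition $\lambda_1+\lambda_2=\lambda_1\lambda_2T$ with $\lambda_1\lambda_2<0$ only says $\lambda_1+\lambda_2<0$. Nothing is contradicted. This is exactly the regime $H_0<-1$ in which the paper finds the positive zero-frequency eigenvalue $\sigma=(H_0+1)/(H_0T)$. To exclude mode $0$ you must use that some positive frequency $k$ is also present, since otherwise $\Phi$ is not an immersion. Substituting $\lambda_1+\lambda_2=\lambda_1\lambda_2T$ into the frequency-$k$ equation gives
\[
k^2\tanh(kT)=\lambda_1\lambda_2\bigl(kT-\tanh(kT)\bigr).
\]
This is impossible when $\lambda_1\lambda_2<0$: the left side is positive and the right side negative. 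The paper argues differently, through conformality. The map $(f\cos k\theta,f\sin k\theta,l)$ is conformal only if $k^2(a^2-b^2)=c_1^2>0$, whereas $\sigma_k^1>k$ and $H_0<-1$ force $|b/a|>1$.

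\textbf{The frequency count is only conjectured.} Your first formulation, at most one $k\geqslant1$, is false for $\epsilon=1$: two frequencies are exactly what produce \eqref{eqn:FBMI annulus 4-dim case}. You need three facts.
\begin{enumerate}[(i)]
\item For $\lambda_1\lambda_2<0$, the map $k\mapsto(k+\lambda_1\lambda_2/k)\tanh(kT)$ is strictly increasing, since both summands increase. So only one frequency occurs.
\item For $\lambda_1\lambda_2>0$, this function has at most one critical point, a minimum, so at most two frequencies occur. The paper obtains this from the monotonicity in $k$ of both branches $\sigma_k^0,\sigma_k^1$, imported from \cite[Lemma 2.1]{fan2015extremal}.
\item When mode $0$ is present with $\lambda_1\lambda_2>0$, the displayed identity gives $\lambda_1\lambda_2T^2=x^2\tanh x/(x-\tanh x)$ with $x=kT$. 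The right side is strictly increasing in $x$, so at most one positive frequency accompanies mode $0$. This is what rules out a five-dimensional image.
\end{enumerate}
Without these facts your bounds of $3$ and $4$ on the dimension of the image are not established.

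\textbf{A minor point.} In your exclusion of the planar case for $\epsilon=1$, the claim $f(0)=f(T)$ is unjustified and unnecessary. The zero of $f'$ in $(0,T)$ already gives $\Phi_t=0$ there, so $\Phi$ is not an immersion.
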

The coordinate functions of each FBMI $\Phi$ in the theorem above are separable Steklov eigenfunctions. The parameters in the Steklov eigenfunctions $f$ or $f_i$ determine the possible forms of the FBMIs and affect asymptotics of the radii ratio $\dfrac{R}{r}$ in the following way.
\begin{theorem}
\label{thm:fbmi cases-2}
Let $\Phi: \mathbb{S}^1\times [0,T]\rightarrow \mathbb{E}^n$ be one of the canonical forms  of the FBMI (\ref{eqn:FBMI annulus 2-dim}--\ref{eqn:FBMI annulus 4-dim case}) in theorem \ref{thm:fbmi cases}. We have the following:
\begin{itemize}
    \item The two-dimensional FBMI \eqref{eqn:FBMI annulus 2-dim} exists for any conformal class $T>0$ given any parameters $k\geqslant 1$ and $a_kb_k>0$. Moreover, for any $k\geqslant 1$ and $T>0$, we have $\dfrac{R}{r}\rightarrow\infty$ as $\dfrac{b_k}{a_k}\rightarrow\infty$.
    \item For any $k\geqslant 1$, the three-dimensional FBMI \eqref{eqn:FBMI annulus 3-dim} exists for $T\in \left[\dfrac{T_0}{k},\infty\right)$, where $T_0$ is the constant given by \eqref{eqn: T0 value}. For any fixed $k\geqslant 1$, $\Phi$ in the form of \eqref{eqn:FBMI annulus 3-dim} can be chosen so that $\dfrac{R}{r}\rightarrow\infty$ as $T\rightarrow\infty$.
    \item For any fixed $k_1\geqslant 1$ and $T>0$, for all sufficiently large $k_2>k_1$, the four-dimensional FBMI of the form \eqref{eqn:FBMI annulus 4-dim case} exists. Moreover, for any $k_1\geqslant 1$ and $T>0$ fixed, as $k_2\rightarrow \infty$, the map $\Phi$ can be chosen so that $\dfrac{R}{r}\rightarrow \infty$.
\end{itemize}
\end{theorem}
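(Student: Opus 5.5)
The plan is to turn each canonical form into a finite system of algebraic and transcendental equations in the parameters, and then study that system case by case. Write $g=\lambda(t)(dt^2+d\theta^2)$. The outward conormal is $-\lambda^{-1/2}\partial_t$ at $t=0$ and $+\lambda^{-1/2}\partial_t$ at $t=T$. For a map of the given form, the free boundary conditions in \eqref{eq:annulus} then become three requirements:
\begin{itemize}
\item[(i)] $|\Phi|$ is constant on each boundary circle, equal to $r$ at $t=0$ and to $R$ at $t=T$;
\item[(ii)] $\Phi_t(T)=c_T\Phi(T)$ with $c_T>0$, and $\Phi_t(0)=c_0\Phi(0)$ with $\operatorname{sgn} c_0=-\epsilon$;
\item[(iii)] (conformality) whenever the metric must be the induced one, $|\Phi_t|^2=|\Phi_\theta|^2$.
\end{itemize}
Harmonicity is automatic for the given forms. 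I will use repeatedly that $f=a\cosh(kt)+b\sinh(kt)$ satisfies $f'^2-k^2f^2=k^2(b^2-a^2)$.

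\emph{Two-dimensional case.} Condition (i) is automatic, since $|\Phi|=|f|$. Condition (ii) says $f'(0)/f(0)=kb_k/a_k$ and $f'(T)/f(T)$ have the required signs. When $a_kb_k>0$, $f$ is monotone and has no zero on $[0,T]$. This gives $\epsilon=-1$ and $c_T>0$ for every $T>0$, with the boundary values of $\lambda$ chosen accordingly. The radii ratio is then explicit:
\[
\frac{R}{r}=\frac{f(T)}{f(0)}=\cosh(kT)+\frac{b_k}{a_k}\sinh(kT),
\]
and this tends to $\infty$ as $b_k/a_k\to\infty$.

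\emph{Three-dimensional case.} Condition (iii) forces $k^2(b^2-a^2)=-1$, so up to a shift $f=\frac1k\cosh(k(t-t_0))$; this is a catenoid. Condition (i) holds because $|\Phi|^2=f^2+(t+c_2)^2$ is a function of $t$ only. Condition (ii) reduces to
\[
\frac{\coth\big(k(t-t_0)\big)}{k}=t+c_2\qquad\text{at } t=0 \text{ and } t=T,
\]
together with the sign conditions. Rescale $s=k(t-t_0)$. The two equations then say that two points $s_0<s_1$ satisfy $\coth s-s=\text{const}$, with $s_1-s_0=kT$. Eliminating the constant leaves one equation in $(s_0,kT)$. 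The function $s\mapsto\coth s-s$ is strictly monotone on each half-line. I would study the graph of the difference $h(s_0)=s_1-s_0$ of paired solutions with $s_0<0<s_1$; this is where the sign $\epsilon=1$ enters. The goal is to show that its range is exactly $[T_0,\infty)$, with $T_0$ the minimum, attained at the critical-catenoid–type configuration, as in \eqref{eqn: T0 value}. This gives existence precisely for $T\ge T_0/k$. For $R/r\to\infty$, I would take the branch with $s_1\to\infty$, $s_0\to0^-$. Along this branch $T\to\infty$, $R\sim\cosh(s_1)/k$ grows exponentially, and $r$ stays bounded.

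\emph{Four-dimensional case.} Condition (i) requires $f_1^2+f_2^2$ to be the prescribed constants at the two ends, which holds. Condition (ii) becomes $f_1'/f_1=f_2'/f_2$ at both $t=0$ and $t=T$. Condition (iii) becomes
\[
k_1^2(b_1^2-a_1^2)+k_2^2(b_2^2-a_2^2)=0.
\]
Normalize $f_1$ by a scaling and write $f_2=A\cosh(k_2(t-\tau))$ or $A\sinh(k_2(t-\tau))$. For large $k_2$, the log-derivative $f_2'/f_2$ is approximately $\pm k_2$ away from $\tau$ and sweeps all of $\mathbb{R}$ in an $O(1/k_2)$ window. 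So the two matching equations can be solved by placing $\tau$ near the interior, or near an endpoint, and applying the intermediate value theorem with $k_1,T$ fixed. The conformality relation then fixes $A$. The main obstacle is satisfying all three conditions simultaneously with the correct signs of $c_0,c_T$. I would first try the ansatz $f_1=\cosh(k_1(t-t_1))$ and $f_2=A\sinh(k_2(t-\tau))$, and use an implicit function or degree argument as $k_2\to\infty$. Once a solution is found, the ratio $R^2/r^2=(f_1^2+f_2^2)(T)/(f_1^2+f_2^2)(0)$ should be dominated by $e^{2k_2(T-\tau)}$ against $O(1)$ terms. This would give $R/r\to\infty$, provided $\tau$ stays bounded away from $T$, which I expect to check from the asymptotics of the matching equations.
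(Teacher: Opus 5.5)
\textbf{Four-dimensional case: the proposed ansatz cannot work, and no existence argument is given.} The log-derivative does not sweep $\mathbb{R}$. For $f=A\cosh(k(t-\tau))$ one has $f'/f=k\tanh(k(t-\tau))\in(-k,k)$. For $f=A\sinh(k(t-\tau))$ one has $|f'/f|=k|\coth(k(t-\tau))|>k$. With your choice $f_1=\cosh(k_1(t-t_1))$, $f_2=A\sinh(k_2(t-\tau))$ you get $|f_1'/f_1|<k_1<k_2<|f_2'/f_2|$ everywhere, so $f_1'/f_1=f_2'/f_2$ fails at both ends.

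Conformality forces one factor of $\sinh$-type and one of $\cosh$-type. (Two pure exponentials are conformal but fail the matching, since then $|f_i'/f_i|\equiv k_i$.) The same inequalities then force the opposite assignment to yours. The high-frequency factor must be of $\cosh$-type, and the low-frequency one of $\sinh$-type with a zero $t_1\in(0,T)$. In the paper this is $|b_1/a_1|>1>|b_2/a_2|$, i.e.\ $\sigma=\sigma^1_{k_1}=\sigma^0_{k_2}$.

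With $f_1=\sinh(k_1(t-t_1))$ and $f_2=\frac{k_1}{k_2}\cosh(k_2(t-\tau))$, the matching conditions become $k_1\coth(k_1t_1)=k_2\tanh(k_2\tau)$ and $k_1\coth(k_1(T-t_1))=k_2\tanh(k_2(T-\tau))$. Your plan then closes by a one-variable intermediate value argument:
\begin{enumerate}
\item For $k_2>k_1\coth(k_1T/2)$, the first equation defines $\tau(t_1)$ continuously on $[T/2,T)$, with $k_2\tau\to0$ uniformly as $k_2\to\infty$.
\item In the second equation, the left side is the smaller one at $t_1=T/2$ once $k_2$ is large. As $t_1\to T^-$ it tends to $+\infty$, while the right side stays below $k_2$.
\item Then $r$ stays bounded and $R\geqslant\frac{k_1}{k_2}\cosh(k_2(T-\tau))\to\infty$.
\item $\Phi$ is an immersion because $|\Phi_t|=|\Phi_\theta|\geqslant k_2|f_2|>0$.
\end{enumerate}
This would be more direct than the paper's argument, which instead looks for $H_0\to0^+$ making the two quadratics compatible.

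\textbf{Three-dimensional case: a different route that works once completed.} The paper intersects $\sigma=(H_0+1)/(H_0T)$ with $\sigma_k^0$ and studies \eqref{eqn:parameter relation 3-dim} in $H_0$. Your catenoid normal form avoids the auxiliary weights and identifies $T_0$ directly. You still need to supply the step you only announce. The function $\psi(s)=\coth s-s$ is odd, and on $(0,\infty)$ it is strictly decreasing and strictly convex. Hence its inverse $s_1(C)$ there is convex, $s_0=-s_1(-C)$, and $h(C)=s_1(C)+s_1(-C)$ is even and convex with minimum $2s_*$, where $\coth s_*=s_*$. That minimum equals $T_0$, since $T_0/2=\coth(T_0/2)$, and $h\to\infty$ at both ends.

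One claim is wrong: $r$ does not stay bounded on your branch. Since $c_2=\coth(s_0)/k$, we get $r^2=k^{-2}(\cosh^2 s_0+\coth^2 s_0)\to\infty$ as $s_0\to0^-$. The conclusion survives because $r\sim T$ while $R\sim e^{kT}/(2k)$. This is the paper's comparison of $T^2$ with $e^{2T}$.

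\textbf{Two-dimensional case: dropping (iii) is not harmless.} By your own identity, $|\Phi_t|^2-|\Phi_\theta|^2=k^2(b_k^2-a_k^2)$. So the induced metric is conformal to $dt^2+d\theta^2$ only when $b_k=a_k$. The factor $\lambda$ is not at your disposal: $g$ must be the induced metric for \eqref{eq:annulus} to encode orthogonality.

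For $b_k\neq a_k$ the map still parametrizes the $k$-fold covered planar annulus $\{r\leqslant|x|\leqslant R\}$. But its conformal class is $\frac1k\log(R/r)$, not $T$. Hence at a fixed conformal class $R/r=e^{kT}$ is forced. The divergence as $b_k/a_k\to\infty$ holds only if $T$ means the length of the parameter cylinder.

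The paper's proof makes exactly the same step: it asserts that $\cosh(kt)+b\sinh(kt)$ gives a conformal immersion for every $b>0$. On this point your argument coincides with the paper's, and the defect lies in the first bullet as stated rather than in your route.
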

Our second set of results are related to the (Morse) index of an FBMI with boundaries in a collection of concentric spheres. The standard formulas for the second variation of the volume and energy functionals yield the following expressions:

1) The second variation of the volume functional with respect to a normal variation field $V$ is given by
\begin{equation}\label{index_S}
\begin{split}
S(V,V) &= \int_\Sigma \left( |\nabla^\perp V|^2 - \mathcal {B}(V) \right) \,dv_g \\
&\quad - \sum_\alpha \frac{1}{R_\alpha} \int_{\partial_\alpha\Sigma} |V|^2 \,ds_g + \sum_\beta \frac{1}{r_\beta} \int_{\partial_\beta\Sigma} |V|^2 \,ds_g.
\end{split}
\end{equation}
where $\nabla^\perp$ denotes the normal connection on the normal bundle of $\Sigma$, and $\mathcal{B}(V)$ represents the Simons operator. Recall that $\mathcal{B} \equiv 0$ whenever $\Sigma$ is totally geodesic.

2) The second variation of the energy functional with respect to a variation vector field $X$ is given by
\[
S_E(X,X)=\int_\Sigma|\nabla X|^2\,dv_g-\sum_\alpha \frac{1}{R_\alpha}\int_{\partial_\alpha\Sigma}|X|^2\,ds_g+\sum_\beta \frac{1}{r_\beta}\int_{\partial_\beta\Sigma}|X|^2\,ds_g,
\]
where $\nabla$ denotes the Levi-Civita connection associated with the Euclidean metric.

We denote the index of the energy and volume functionals of $(\Sigma,g)$ by $\Ind_E(\Sigma)$ and $\Ind(\Sigma)$, respectively. 

The primary novelty of this part of the article lies in adapting the framework developed in~\cite{medvedev2023index,medvedev2025free} to geometric setting of FBMI with boundaries in a collection of concentric spheres through the introduction of the spectral index. As detailed in Definition~\ref{def:ind_spec}, the spectral index $\Ind_S(\Sigma)$ of an FBMI $\Sigma$ is defined as the number of non-negative eigenvalues that are strictly less than one.

We establish the following main results.

\begin{theorem}\label{thm:ind_M}
Let $\Sigma$ be a $2$-dimensional compact free boundary minimal surface with boundary on a collection of concentric spheres in $\mathbb{E}^n$. Then
\[
\Ind(\Sigma) \leqslant n \Ind_S(\Sigma) + \dim \mathcal{M}(\Sigma),
\]
where $\mathcal{M}(\Sigma)$ denotes the moduli space of conformal classes on $\Sigma$.
\end{theorem}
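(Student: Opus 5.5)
We follow the scheme of Fraser--Schoen and Medvedev: first bound the energy index by the spectral index, then compare the energy index with the area index using the moduli of conformal structures.

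\textbf{Step 1: energy index.} Consider the quadratic form $S_E$ on all vector fields $X=(X_1,\dots,X_n)$ along $\Sigma$. Since $\nabla$ is the flat Euclidean connection, $|\nabla X|^2=\sum_i|\nabla^g X_i|^2$, so $S_E$ splits as a direct sum of $n$ copies of the scalar form
\[
Q(u,u)=\int_\Sigma |\nabla^g u|^2\,dv_g-\int_{\partial\Sigma}\rho\, u^2\,ds_g ,
\]
with $\rho$ the weight \eqref{rho}. Hence $\Ind_E(\Sigma)=n\,\Ind(Q)$. To bound $\Ind(Q)$, decompose $u=h+w$, where $h$ is the harmonic extension of $u|_{\partial\Sigma}$ and $w\in H^1_0$. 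Then $Q(u,u)=Q(h,h)+\int|\nabla w|^2$, so negative directions of $Q$ come only from harmonic functions. On harmonic functions we expand in the eigenfunctions of the indefinite Steklov problem \eqref{eq:steklov_weight}. For an eigenfunction with eigenvalue $\sigma$ we get $Q(u,u)=(\sigma-1)\int_{\partial\Sigma}\rho u^2$. These eigenfunctions are $Q$-orthogonal and also orthogonal for the indefinite pairing $\int\rho uv$.

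If $\sigma^-_i<0$, then $\int\rho u^2<0$, so $Q>0$. If $\sigma=0$ (constants, for which $\int \rho\neq0$ generally), we read off the sign of $\int\rho$ directly; Definition~\ref{def:ind_spec} counts $\sigma_0=0$ among the non-negative eigenvalues below one. If $0<\sigma<1$, then $\int\rho u^2>0$ and $Q<0$. It follows that $\Ind(Q)\leqslant$ the number of eigenvalues in $[0,1)$, which is $\Ind_S(\Sigma)$. Therefore $\Ind_E(\Sigma)\leqslant n\Ind_S(\Sigma)$.

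\textbf{Step 2: area index versus energy index.} The key point is the adaptation of the Fraser--Schoen argument, \cite[Theorem 3.1]{fraser2020geometric} and \cite{medvedev2023index}, to the present boundary conditions. Let $V$ range over an $\Ind(\Sigma)$-dimensional space $W$ of normal fields on which $S$ is negative definite. Since $\Sigma$ is minimal and conformally immersed, for any vector field $X=V+Y$ with $Y$ tangent one has
\[
S_E(X,X)=S(V,V)+\text{(a term depending on $Y$ and on } \mathcal{L}_Y g-\tfrac12(\mathrm{div}Y)g).
\]
This term vanishes once $Y$ solves the $\bar\partial$-type equation making $V+Y$ conformal, i.e.\ $\mathcal{L}_Yg$ pure trace, with $Y$ tangent to $\partial\Sigma$. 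The boundary terms in $S$ and $S_E$ coincide because $|X|^2=|V|^2+|Y|^2$ and the tangential part's boundary contribution combines with the second fundamental form of the spheres. This is exactly where the sign $\pm$ on the two kinds of spheres has to be tracked, using that $\Sigma$ meets each sphere orthogonally, and I expect it to be the main technical check.

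\textbf{Step 3: conclusion.} The equation for $Y$ is solvable modulo a cokernel isomorphic to holomorphic quadratic differentials real on $\partial\Sigma$, which has dimension $\dim\mathcal{M}(\Sigma)$. So there is a subspace $W'\subset W$ of codimension at most $\dim\mathcal{M}(\Sigma)$ on which the $Y$ can be chosen, and for $V\in W'$ we have $S_E(V+Y,V+Y)=S(V,V)<0$. The map $V\mapsto V+Y$ is injective, since the normal part recovers $V$. Hence $\Ind(\Sigma)-\dim\mathcal{M}(\Sigma)\leqslant\Ind_E(\Sigma)\leqslant n\Ind_S(\Sigma)$, which is the claimed bound.
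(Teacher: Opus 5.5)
Your proof has the same two-step structure as the paper's. Step~1 is Proposition~\ref{prop:spec_ind}; your splitting $u=h+w$ is the same Dirichlet-principle argument as the paper's dimension count. Steps~2--3 are the inequality $\Ind(\Sigma)\leqslant\Ind_E(\Sigma)+\dim\mathcal{M}(\Sigma)$. The paper does not reprove this; it quotes it from \cite{lima2023bounds}, which holds in any Riemannian manifold with boundary once the configuration is placed in a smooth bounded domain. Since you re-derive it, two slips need fixing.

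First, the condition on $Y$ is not ``$\mathcal{L}_Y g$ pure trace''. That condition only makes $Y$ conformal Killing and involves no $V$, so the defect term would not vanish. The first variation of the induced metric along $X=V+Y$ is $\mathcal{L}_Y g-2\langle B,V\rangle$, with $B$ the second fundamental form. Since $\langle B,V\rangle$ is trace-free by minimality, the correct equation is $(\mathcal{L}_Y g)^{\circ}=2\langle B,V\rangle$ for the trace-free part. It is this $V$-dependent right-hand side, paired with holomorphic quadratic differentials real on $\partial\Sigma$, that gives the codimension count of Step~3. You should also choose $Y$ linearly in $V$, so that $V\mapsto V+Y$ produces a subspace.

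Second, the boundary terms do not coincide termwise: $S_E$ carries an extra $-\int_{\partial\Sigma}\rho|Y|^2\,ds_g$. However, no sign bookkeeping on the spheres is needed. Take an admissible variation $u_t$ with velocity $X$. Differentiating $|u_t|^2=\mathrm{const}$ twice on each boundary component and using $\partial_\eta u=\pm u/|u|$ there gives $\int_{\partial\Sigma}\langle\partial_\eta u,\ddot u\rangle\,ds_g=-\int_{\partial\Sigma}\rho|X|^2\,ds_g$. Hence $(E(u_t))''(0)=S_E(X,X)$. By invariance of area under reparametrizations preserving $\partial\Sigma$, $(\Area(u_t))''(0)=S(V,V)$. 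Since $E(u_t)\geqslant\Area(u_t)$, with equality and vanishing first derivatives at $t=0$, you get $S_E(X,X)\geqslant S(V,V)$. Equality holds exactly when $u_t$ is conformal to first order, which is what Step~3 needs.

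A smaller point in Step~1: when $\int_{\partial\Sigma}\rho\,ds_g=0$ (e.g.\ the planar flat annulus), the constants are $\rho$-null. Then $\sigma=0$ carries a Jordan chain, namely a harmonic $v$ with $\partial_\eta v=\rho$, so eigenfunctions alone do not span. The space $\operatorname{span}\{1,v\}$ contributes exactly one negative direction of $Q$, so $\Ind(Q)\leqslant\Ind_S(\Sigma)$ still holds.
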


\begin{theorem}\label{+n}
Let $\Sigma$ be a compact non-totally geodesic free boundary minimal hypersurface with boundary on a collection of concentric spheres in $\mathbb{E}^n$. Then
\[
\Ind(\Sigma) \geqslant \Ind_S(\Sigma) + n.
\]
\end{theorem}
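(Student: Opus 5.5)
We adapt the strategy used for free boundary minimal hypersurfaces in the unit ball, and in geodesic balls of space forms, to the weighted setting. Since $\Sigma$ is a hypersurface, normal variations are $V=u\nu$ with $u$ a function. The quadratic form \eqref{index_S} becomes
\[
S(u,u)=\int_\Sigma \bigl(|\nabla u|^2-|A|^2u^2\bigr)\,dv_g-\int_{\partial\Sigma}\rho\, u^2\,ds_g ,
\]
with $\rho$ as in \eqref{rho}. The plan is to exhibit a subspace of dimension $\Ind_S(\Sigma)+n$ on which $S$ is negative definite.

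\textbf{First family.} Let $u_1,\dots,u_k$, with $k=\Ind_S(\Sigma)$, be eigenfunctions of \eqref{eq:steklov_weight} with eigenvalues $\sigma_j\in[0,1)$. For such functions, $\int_\Sigma|\nabla u|^2=\sigma_j\int_{\partial\Sigma}\rho u^2$. On the span of eigenfunctions with nonnegative eigenvalue, $\int_{\partial\Sigma}\rho u^2\geqslant 0$, with equality only for constants. Hence
\[
S(u,u)\leqslant (\sigma-1)\int_{\partial\Sigma}\rho u^2-\int|A|^2u^2<0
\]
for $u\neq 0$. The constant $\sigma_0=0$ case also gives a negative value, because $\Sigma$ is not totally geodesic, so $\int|A|^2>0$.

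\textbf{Second family.} Take the $n$ functions $w_i=\langle\nu,e_i\rangle$, or better the variations $\langle \nu, a\rangle$ for $a\in\mathbb{E}^n$. From the minimality equation, $\Delta_g w_i=|A|^2 w_i$ (with the positive Laplacian convention). For the boundary term, use that $\eta=\pm x/|x|$ along $\partial\Sigma$. Then $\partial_\eta \nu=-A(\eta)$, and $A(\eta,\cdot)$ restricted to the boundary tangent directions vanishes, since the sphere is umbilic and $\nu$ is tangent to it. This gives
\[
\partial_\eta w_i=-A(\eta,\eta)\langle\eta,e_i\rangle^{T}\text{-type term},
\]
which combined with umbilicity should yield $\partial_\eta w_i=\rho w_i$ plus a controlled term. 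I expect the correct combination to be $w_i$ or $x_iH$-corrected functions such as $\langle x,\nu\rangle$-mixtures. The precise choice, following Medvedev's work on geodesic balls, is the main technical step.

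Then $S(w,w)=-\int|A|^2w^2$-type expressions, or $S(w,w)\leqslant 0$ on this span. This needs to be made strict and combined with the first family.

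\textbf{Combining the two families.} The key is to show that the sum $U\oplus W$ of the eigenfunction span $U$ and the $n$-dimensional span $W$ is direct, and that $S$ is negative on it. For this, compute the mixed terms $S(u,w)$. Using that $u$ is harmonic with $\partial_\eta u=\sigma\rho u$, integrating by parts gives
\[
S(u,w)=\int_{\partial\Sigma}u\,\partial_\eta w-\rho uw-\int|A|^2uw+\int_\Sigma u\Delta w,
\]
and the defining equations of $w$ make most terms cancel. This should give $S(u+w,u+w)=S(u,u)+S(w,w)$ up to sign-controlled terms.

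Directness uses non-totally-geodesicity: if $w\in U$, then $w$ would be harmonic, forcing $|A|^2w=0$. By unique continuation, either $w\equiv0$, which would mean $\Sigma$ lies in a hyperplane and hence $A\equiv 0$, or $A\equiv 0$. Both contradict the hypothesis. Likewise, the map $a\mapsto\langle\nu,a\rangle$ is injective.

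\textbf{Main obstacle.} The hardest step is identifying the right $n$ test functions and verifying both the boundary identity and the orthogonality/cross-term cancellation for the indefinite weight. This is delicate because $\rho$ changes sign, so $\int_{\partial\Sigma}\rho w^2$ need not be nonnegative. I would first try $w_a=\langle\nu,a\rangle$ and check directly that $S(w_a,w_a)=-\int_{\partial\Sigma}\rho\,w_a^2+\dots$ still comes out negative, using the free boundary identity along each sphere separately.
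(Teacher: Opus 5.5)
Your first family is fine for eigenvalues in $(0,1)$. The proposal has a genuine gap at its core, though: you never identify the $n$ additional test functions, and the candidate you lean towards, $w_a=\langle\nu,a\rangle$, does not work.

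\textbf{Why $w_a$ fails.} These functions are Jacobi fields in the interior, so the term $-\int_\Sigma|A|^2w_a^2$ cancels instead of surviving. What remains is
$S(w_a,w_a)=\int_{\partial\Sigma}w_a(\partial_\eta w_a-\rho w_a)\,ds_g=-\int_{\partial\Sigma}\bigl(A(\eta,\eta)\langle\eta,a\rangle\langle\nu,a\rangle+\rho\langle\nu,a\rangle^2\bigr)\,ds_g$,
using $\partial_\eta w_a=-A(\eta,\eta)\langle\eta,a\rangle$ from the free boundary condition. The first term has no sign. The second is positive on components lying on inner spheres, where $\rho<0$. The mixed terms do not cancel either:
$S(u,w_a)=(\sigma-1)\int_{\partial\Sigma}\rho\,u\,w_a\,ds_g-\int_\Sigma|A|^2u\,w_a\,dv_g$,
because $w_a$ is not a weighted Steklov eigenfunction. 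So neither negativity on the second family nor the ``cancellation of mixed terms'' can be established with this choice.

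\textbf{The missing idea.} Since $\Sigma$ is a hypersurface, every smooth function $\psi$ is an admissible normal speed $\psi\nu$. The coordinate functions $\phi_1,\dots,\phi_n$ themselves satisfy \eqref{eqn:general}, so they are weighted Steklov eigenfunctions with eigenvalue exactly $1$. For harmonic $\psi$ one has
$S(\psi,\psi)=-\int_\Sigma|A|^2\psi^2\,dv_g+\int_{\partial\Sigma}(\partial_\eta\psi-\rho\psi)\psi\,ds_g$.
Take $\psi=\sum_i\alpha_iu_i+\sum_j\beta_j\phi_j$, with the $u_i$ orthonormal in $L^2(\partial\Sigma,\rho\,ds_g)$ and eigenvalues $\sigma_i\in(0,1)$. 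Then:
\begin{itemize}
\item The boundary term equals exactly $\sum_i\alpha_i^2(\sigma_i-1)$. The $\phi_j$-contributions cancel because their eigenvalue is $1$.
\item All mixed boundary terms vanish, because eigenfunctions with distinct eigenvalues are orthogonal in $L^2(\partial\Sigma,\rho\,ds_g)$.
\item The interior mixed terms are simply absorbed into the full square $-\int_\Sigma|A|^2\psi^2\leqslant 0$.
\end{itemize}
So the sign of $\rho$ never enters. If $S(\psi,\psi)=0$, then $\alpha=0$ and $\langle x,\beta\rangle$ vanishes on the open set where $A\neq0$. By unique continuation for harmonic functions it then vanishes identically, which would put $\Sigma$ in a hyperplane. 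This gives strict negativity on a space of dimension $\Ind_S(\Sigma)+n$.

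\textbf{A secondary problem.} Your treatment of the constant mode is unjustified. We have $S(1,1)=-\int_\Sigma|A|^2\,dv_g-\int_{\partial\Sigma}\rho\,ds_g$, and $\int_{\partial\Sigma}\rho\,ds_g$ can be negative when boundary lies on inner spheres. Hence your claim that $\int_{\partial\Sigma}\rho u^2\geqslant 0$ on the span of nonnegative-eigenvalue eigenfunctions is false in general. The paper's argument only uses eigenfunctions with $0<\sigma_i<1$, for which $\int_{\partial\Sigma}\rho u_i^2=\sigma_i^{-1}\int_\Sigma|\nabla u_i|^2>0$.
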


Finally, we establish the following result, which completes our analysis.

\begin{theorem}\label{thm:annulus}
 The Morse index of an $m$-dimensional flat annulus, considered as a free boundary minimal submanifold in an $n$-dimensional spherical shell, is exactly $n-m$.
\end{theorem}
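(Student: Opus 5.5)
The flat annulus is $\Sigma=\{x\in\mathbb{E}^m: r\leqslant |x|\leqslant R\}\subset \mathbb{E}^m\times\{0\}\subset\mathbb{E}^n$. It is totally geodesic, so the Simons term $\mathcal B$ in \eqref{index_S} vanishes. Its normal bundle is trivial, spanned by the constant vectors $e_{m+1},\dots,e_n$, so $\nabla^\perp$ is the flat connection. The inner boundary $|x|=r$ meets $\mathbb{S}^{n-1}(r)$ from the outside, and the outer boundary meets $\mathbb{S}^{n-1}(R)$ from the inside. Writing a normal field as $V=\sum_{j=m+1}^n u_j e_j$, the form \eqref{index_S} splits as a direct sum of $n-m$ copies of the scalar form
\[
Q(u,u)=\int_\Sigma |\nabla u|^2\,dv - \frac1R\int_{|x|=R}u^2\,ds+\frac1r\int_{|x|=r}u^2\,ds .
\]
Hence $\Ind(\Sigma)=(n-m)\cdot\Ind(Q)$, and it remains to show that $Q$ has index exactly one.

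\emph{Lower bound.} Take $u\equiv 1$. Then
\[
Q(1,1)=-\frac{1}{R}|\mathbb S^{m-1}|R^{m-1}+\frac1r|\mathbb S^{m-1}|r^{m-1}=|\mathbb S^{m-1}|\bigl(r^{m-2}-R^{m-2}\bigr).
\]
For $m\geqslant 3$ this is negative. For $m=2$ it vanishes, so I would replace $u\equiv1$ by a radial function that is slightly smaller near $|x|=r$, or argue via the first eigenvalue of the Robin-type problem below, to get a negative direction. (Geometrically, translating in a normal direction decreases area.) Either way $\Ind(Q)\geqslant 1$.

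\emph{Upper bound.} This is the main step. The index of $Q$ equals the number of negative eigenvalues $\lambda$ of
\[
\Delta u=\lambda u \text{ in } \Sigma,\qquad \partial_\eta u=\tfrac1R u \text{ on } |x|=R,\qquad \partial_\eta u=-\tfrac1r u \text{ on } |x|=r.
\]
I would separate variables using spherical harmonics of degree $\ell$ with eigenvalue $\ell(\ell+m-2)$. This gives radial forms
\[
Q_\ell(f)=\int_r^R\Bigl(f'^2+\frac{\ell(\ell+m-2)}{s^2}f^2\Bigr)s^{m-1}\,ds-R^{m-2}f(R)^2+r^{m-2}f(r)^2.
\]
For each $\ell$ this is a Sturm--Liouville problem, so negative eigenvalues are counted by the oscillation theory of the solution of $\Delta u=0$ at $\lambda=0$.

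For $\ell\geqslant1$ I would show $Q_\ell\geqslant 0$. The functions $s^\ell$ and $s^{-(\ell+m-2)}$ are harmonic, and I would choose the weight $w=s$, which corresponds to the coordinate functions $x_i$ with $\ell=1$. A direct check shows $w$ satisfies both boundary conditions: at $R$ we get $w'/w=1/R$, and at the inner boundary, with outward conormal $-\partial_s$, we get $-w'/w=-1/r$. Since $w$ solves the $\ell=1$ equation with these boundary conditions and is positive, $0$ is the lowest eigenvalue of $Q_1$, so $Q_1\geqslant 0$ with kernel $x_i$. These are the Jacobi fields coming from rotations. Then $Q_\ell\geqslant Q_1\geqslant0$ for all $\ell\geqslant1$, since the potential term increases with $\ell$.

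For $\ell=0$, the zero-mode harmonic solution satisfying the outer condition is $f=s^{2-m}$ (for $m=2$, $f=\log s$, suitably shifted). I would check via one sign change, or by a Picone/Barta-type comparison with $w$, that $Q_0$ has exactly one negative eigenvalue. As a cross-check, I would compare with Theorems \ref{thm:ind_M} and \ref{+n}: for $n=m+1$, Theorem \ref{+n} does not apply because $\Sigma$ is totally geodesic, but the spectral index $\Ind_S$ relates the counting to eigenvalues of the weighted Steklov problem \eqref{eq:steklov_weight} below $1$. Here the coordinate functions sit at eigenvalue exactly $1$.

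The delicate point I expect is the $\ell=0$ count, in particular handling $m=2$, where $Q(1,1)=0$ exactly. Combining the two bounds gives $\Ind(Q)=1$, and therefore $\Ind(\Sigma)=n-m$.
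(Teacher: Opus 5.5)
Your route differs from the paper's and most of it is sound, but the $\ell=0$ upper bound, the step you flag as delicate, is left open. The sign-change check you propose for it would not work as stated, for two reasons.

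\begin{enumerate}
\item The function $s^{2-m}$ does not satisfy the outer natural condition $Rf'(R)=f(R)$. The $\lambda=0$ solution with that condition is $f_0=s^{2-m}-(m-1)R^{2-m}$ (resp.\ $1+\log(s/R)$ for $m=2$).
\item $f_0$ has a zero in $(r,R)$ only when $r<R(m-1)^{-1/(m-2)}$ (resp.\ $r<R/e$). With Robin conditions at both ends, the zero count of the $\lambda=0$ solution determines the number of negative eigenvalues only up to an endpoint correction. For instance, for $r$ close to $R$, $f_0$ has no zero, yet $Q_0$ has a negative eigenvalue.
\end{enumerate}

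The missing observation is simple. The only negative term of $Q_0$ is the rank-one term $-R^{m-2}f(R)^2$. On the codimension-one subspace $\{f(R)=0\}$ one has $Q_0(f)=\int_r^R f'^2 s^{m-1}\,ds+r^{m-2}f(r)^2\geqslant 0$. So no two-dimensional subspace is negative definite, and $\Ind(Q_0)\leqslant 1$. With this, your $\ell\geqslant 1$ Picone argument closes the upper bound. That argument is correct: $Q_1(sh)=\int_r^R s^{m+1}h'^2\,ds$, since $w=s$ satisfies both Robin conditions. Together with your lower bounds this gives $\Ind(Q)=1$, hence $\Ind(\Sigma)=n-m$.

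For comparison, the paper does not separate variables in the Jacobi form. It uses the Dirichlet principle to get $\Ind\leqslant (n-m)\Ind_S$. It then shows, via Torn\'e's nodal-domain theorem and spherical harmonics, that the coordinate functions are $\sigma_1^+$-eigenfunctions with $\sigma_1^+=1$, so $\Ind_S=1$. Its lower bound uses constant normal fields for $m\geqslant 3$ and $\varphi=\ln(t/r)$ for $m=2$, $n=3$.

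Your direct decomposition has several advantages:
\begin{itemize}
\item It avoids the indefinite-weight Steklov machinery. That machinery is awkward for $m=2$, where $\int_{\partial\Sigma}\rho\,ds=0$ makes the constant an isotropic $\sigma=0$ eigenfunction.
\item It handles every $n$ at once through the splitting $\Ind(\Sigma)=(n-m)\Ind(Q)$.
\item Your $m=2$ lower bound is the more robust one. For radial $v$ one has $Q(1,v)=2\pi\bigl(v(r)-v(R)\bigr)$, so $1+\epsilon v$ with $v(r)<v(R)$ is a negative direction for every ratio $R/r$. The paper's test function gives $S(\varphi,\varphi)=2\pi L(1-L)$ with $L=\ln(R/r)$, which is negative only when $R/r>e$.
\end{itemize}
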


We define both a flat annulus and a spherical shell as the region bounded by two concentric spheres. However, we primarily use the term ``spherical shell'' when referring to the ambient domain, while ``flat annulus'' typically refers to the minimal submanifold itself.

In our forthcoming paper, we plan to compute the Morse index of the examples constructed in Theorem~\ref{thm:fbmi cases}.

\subsection*{Acknowledgement}	The study was implemented in the framework of the Basic Research Program at HSE University (HSE-BR-2025-84). The authors would like to thank Viktoria Ipatenkova for her valuable assistance in preparing the results presented in Subsections~\ref{flat} and~\ref{appendix}. The authors also thank Mikhail Karpukhin and Iosif Polterovich for their valuable comments on an earlier version of this manuscript.

\section{FBMI on Annuli}

In this section, we construct examples of $\mathbb{S}^1$-symmetric free boundary minimal annuli within a spherical shell, adapting the strategy employed in \cite{fraser2011first,fan2015extremal}. Throughout this section, $\Sigma$ will always be a $2$-dimensional annulus.

\subsection{Preliminary Results} By the uniformization theorem, any metric on a topological cylinder is conformally equivalent to a unique flat metric on the cylinder $[0,T]\times \mathbb S^1$ with coordinates $(t,\theta)$; the parameter $T>0$ is uniquely determined. The induced metric $g$ on $\Sigma$ is then conformal to $g_T = dt^2 + d\theta^2$, i.e., $g = e^{2\omega(t,\theta)} g_T$. We are interested in $\mathbb S^1$-symmetric free boundary minimal annuli, so $\omega(t,\theta)=\omega(t)$. Clearly,
\[
\Delta_g = e^{-2\omega(t)}\Delta_0,\qquad \Delta_0 = \frac{\partial^2}{\partial t^2} + \frac{\partial^2}{\partial\theta^2},
\]
and the outward normal derivative on the boundaries is given by
\[
\frac{\partial}{\partial\eta}= e^{-\omega(t)}\frac{\partial}{\partial t}\quad\text{on } L_1\ (t=0)\text{ and } L_2\ (t=T),\text{ respectively}.
\]
Then problem \eqref{eq:annulus} can be rewritten as
\begin{align}\label{eq:annulus_conf}
\begin{cases}
\Delta_0 \phi_i=0,&\ i=1,\ldots,n;\\[2pt]
-\dfrac{\partial \phi_i}{\partial t}= e^{\omega(0)}\dfrac{\epsilon}{r}\phi_i,&\ t=0;\\[2pt]
\dfrac{\partial \phi_i}{\partial t}= e^{\omega(T)}\dfrac{1}{R}\phi_i,&\ t=T.
\end{cases}
\end{align}

One can verify that $e^{\omega(0)} = |L_1|_g/(2\pi)$ and $e^{\omega(T)} = |L_2|_g/(2\pi)$. 

Introduce the notation
\begin{align}
\label{eqn:H forms}
\sigma H_0 = \frac{\epsilon}{r}\cdot\frac{|L_1|_g}{2\pi},\qquad 
\sigma H_T = \frac{1}{R}\cdot\frac{|L_2|_g}{2\pi} > 0.
\end{align}
Then problem \eqref{eq:annulus_conf} becomes
\begin{align}\label{eq:annulus_H}
\begin{cases}
\Delta_0 \phi_i=0,&\ i=1,\ldots,n;\\[2pt]
-\dfrac{\partial \phi_i}{\partial t}= \sigma H_0\phi_i,&\ t=0;\\[2pt]
\dfrac{\partial \phi_i}{\partial t}= \sigma H_T\phi_i,&\ t=T.
\end{cases}
\end{align}
Note that $\partial t$ is a constant multiple of $\eta$ on each component of $\partial\Sigma$. Thus, with respect to the metric $g_T$ (not $g$), $\phi_i$ is an eigenfunction of a Steklov problem with weights $H_0$ on $L_1$ and $H_T$ on $L_2$, corresponding to the eigenvalue $\sigma$. For a given metric $g_T$, the values of $H_0$ and $H_T$ satisfying \eqref{eqn:H forms} are determined up to constant scalings: if $\phi_i$ is an eigenfunction with respect to the weights $(H_0,H_T)$ for the flat metric $g_T$ corresponding to the eigenvalue $\sigma$, then it is also an eigenfunction with respect to $(c H_0, c H_T)$ corresponding to the eigenvalue $\sigma/c$ for any $c\neq 0$. Hence, if $H_T\neq 0$, we can always choose $H_T$ to be positive.

The functions $\phi_i$ are also harmonic with respect to the flat metric $g_T$. Using the separation of variables, we may write $\phi_i$ as a linear combination of functions of the form $f(t)u(\theta)$ that satisfy the problem~\eqref{eq:annulus_H}. Then $f$ and $u$ take the form
\begin{equation}
 \label{eqn:solution component}
 \begin{split}
&f(t)=a\cosh(kt)+b\sinh(kt),\quad u(\theta)=A\cos(k\theta)+B\sin(k\theta),\quad k\geq 1;\\
&f(t)=c_1 t+c_2,\quad u=1.
\end{split}
\end{equation}
We refer to the number $k$ as the frequency of a separable eigenfunction; the second line of \eqref{eqn:solution component} corresponds to the case $k=0$. Substituting a function of the form $f(t)u(\theta)$, with $f$ and $u$ as in~\eqref{eqn:solution component}, into~\eqref{eq:annulus_H}, we find that for $k\geq 1$ the boundary conditions become
\begin{align}
\label{eqn:boundary condition flat}
\begin{cases}
 -kb = \sigma H_0 a,\\[2pt]
k\bigl(a\sinh(kT)+b\cosh(kT)\bigr) = \sigma H_T \bigl(a\cosh(kT)+b\sinh(kT)\bigr).
\end{cases}
\end{align}
If $f$ is not identically zero, then $ab \neq 0$.

From \cite{fraser2011first}, system \eqref{eqn:boundary condition flat} reduces to the following quadratic equation:
\begin{align}
\label{eqn:quadratic eq steklov}
\sigma^2 - k\left(\frac{1}{H_0}+\frac{1}{H_T}\right)\coth(kT)\,\sigma + \frac{k^2}{H_0H_T}=0,\qquad k\geq 1.
\end{align}
Since $H_T\sigma > 0$ and, as previously noted, we may choose $H_T > 0$, we only seek positive solutions of this equation. Its discriminant is always positive. Thus each $k \geqslant 1$ yields a pair of distinct eigenvalues $\{\sigma_k^0,\sigma_k^1\}$, given by
\begin{align*}
\sigma_k^0 &= \frac{k}{2}\Bigl((H_0^{-1}+H_T^{-1})\coth(kT) - \sqrt{D}\Bigr),\\
\sigma_k^1 &= \frac{k}{2}\Bigl((H_0^{-1}+H_T^{-1})\coth(kT) + \sqrt{D}\Bigr),
\end{align*}
where
\[
D = (H_0^{-1}+H_T^{-1})^2\coth^2(kT) - \frac{4}{H_0H_T}.
\]
For any fixed $k \geqslant 1$, the eigen-subspace corresponding to $\sigma_k^0$ or $\sigma_k^1$ spanned by eigenfunctions of frequency $k$ and is two-dimensional, with basis $\{ f(t)\cos(k\theta),\, f(t)\sin(k\theta) \}$.

It is possible that equations of the form \eqref{eqn:quadratic eq steklov} for different $k > 0$ share a common solution. Following the approach in~\cite{fan2015extremal,fraser2011first}, we examine the monotonicity of $\{\sigma_k^i\}$ for $i = 0,1$ and $k \geqslant 1$. This determines the dimension of the immersion map $\Phi$. Since the proofs involve technical computations already available in the literature, we only sketch the outline.

For the case $H_0H_T>0$, both sequences $\{\sigma_k^0\}$ and $\{\sigma_k^1\}$ are positive. The same argument as in \cite[Lemma 2.1]{fan2015extremal} shows that both sequences are increasing with respect to $k$. If $H_0H_T<0$, then $\{\sigma_k^0\}$ is negative and $\{\sigma_k^1\}$ is positive. We have $H_T>0$ and $H_0<0$, and we are only interested in the positive weighted Steklov eigenvalues, namely $\{\sigma_k^1\}$. To analyze the monotonicity of $\{\sigma_k^1\}$, we distinguish two subcases based on the sign of $\gamma := H_0^{-1}+H_T^{-1}$.

If $\gamma<0$, we obtain
\begin{align*}
\sigma_k^1 = \frac{2k}{(H_0H_T\gamma)^{-1}\Bigl(\coth(kT)+\sqrt{\coth^2(kT)-\dfrac{4\gamma^2}{H_0H_T}}\Bigr)}.
\end{align*}
For $T>0$, $\coth(kT)$ is monotonically decreasing in $k\geqslant 1$. Consequently, the denominator of the fraction above is monotonically decreasing, so $\sigma_k^1$ is monotonically increasing with respect to $k$. This is analogous to the monotonicity argument in \cite[Section 3]{fraser2011first}. If $\gamma>0$, the sequence $\{\sigma_k^1\}$ is also monotonically increasing in $k$, following an argument similar to the proof of \cite[Lemma 2.1]{fan2015extremal}.

From the above monotonicity analysis, for any eigenvalue $\sigma>0$, the subspace spanned by its eigenfunctions of positive frequencies is either:
\begin{itemize}
  \item two-dimensional, when $\sigma = \sigma_{k_1}^1 = \sigma_{k_2}^0$ for some $k_1<k_2$; or
  \item four-dimensional, when $\sigma$ belongs to exactly one of the sequences $\{\sigma_k^0\}$ or $\{\sigma_k^1\}$.
\end{itemize}
Thus, the image of $\Phi$, given by Steklov eigenfunctions of \textit{non-zero} frequencies $k$, lies in a $2$-dimensional (trivial case) or a $4$-dimensional subspace.

To simplify the computation, by an appropriate scaling $c>0$ we may assume $H_T=1$. Equation \eqref{eqn:quadratic eq steklov} then reduces to
\begin{align}
\label{eqn: quadratic eq reduced}
\sigma^2 - k\left(1+\frac{1}{H_0}\right)\coth(kT)\,\sigma + \frac{k^2}{H_0}=0.
\end{align}
Clearly, the sign of $H_0$ in \eqref{eqn: quadratic eq reduced} coincides with the sign of $\epsilon$ in the original system \eqref{eq:annulus}. We then divide the proof of Theorem \ref{thm:fbmi cases} into different cases according to the sign of $H_0$.

\subsection{Proof of Theorem \ref{thm:fbmi cases} and Theorem \ref{thm:fbmi cases-2}}
We now prove Theorems~\ref{thm:fbmi cases} and~\ref{thm:fbmi cases-2} together. Since both theorems are organized according to cases depending on the dimension of the image of $\Phi$ in the same order, we unify their proofs and divide the argument into the subcases stated in the theorems.

We first consider the case $\epsilon = -1$. In this situation, $H_0 < 0$, so for each $k \geqslant 1$ equation \eqref{eqn: quadratic eq reduced} has a pair of roots of opposite signs. Consequently, each eigenvalue corresponding to eigenfunctions of non-zero frequency $k$ has multiplicity $2$.

For the case $k=0$, the eigenfunction is of the form $l(t) = c_1 t + c_2$. From the boundary conditions in~\eqref{eq:annulus_H} we obtain
\begin{align*}
-c_1 = \sigma H_0 c_2,\qquad c_1 = \sigma (c_1 T + c_2).
\end{align*}
Clearly, it cannot happen that all coordinate functions of $\Phi$ are affine in $t$ (the $k=0$ case), because then $\Phi$ would not be an immersion. Hence at least one coordinate function must involve eigenfunctions with factors of frequency $k>0$. As explained in the previous section, we restrict to $\sigma > 0$. The boundary conditions then yield
\begin{align}
\label{eqn:linear eigenvalue}
\sigma = \frac{H_0 + 1}{H_0 T},\qquad H_0 < -1.
\end{align}
For $H_0 < -1$, $\sigma(T)$ is decreasing in $T>0$, ranging from $+\infty$ to $0$.

Suppose that for some $k \geqslant 1$ and $H_0 < -1$ there exists $T>0$ such that
\begin{align}
\label{eqn:steklov equal with zero}
\sigma_k^1(H_0,T) = \sigma_0(H_0,T) = \frac{H_0+1}{H_0 T}.
\end{align}
Let $f(t)$ and $l(t)$ be the functions corresponding to frequencies $k$ and $0$, respectively, defined as before. Consider the map
\begin{align}
\label{eqn:fmbi 3-dim opposite}
\Phi(t,\theta)=\begin{pmatrix}
f(t) \cos k\theta\\
f(t)\sin k\theta\\
l(t)
\end{pmatrix}.
\end{align}

In fact, let $\Phi: \mathbb{S}^1\times [0,T] \to \mathbb{E}^{n}$ be a conformal map onto its image with respect to $\mathbb{S}^1$-invariant metrics, whose coordinate functions are spanned by eigenfunctions of frequencies $k \geqslant 1$ and $0$. Then, after an orthogonal transformation of $\mathbb{E}^n$, the image of the conformal map lies in $\mathbb{E}^3$ and takes the form \eqref{eqn:fmbi 3-dim opposite}. This can be shown using the facts that $g(\partial_t,\partial_t)$ and $g(\partial_\theta,\partial_\theta)$ depend only on $t$, and $g(\partial_t,\partial_\theta) = 0$. Recall that $g$ denotes the metric induced from the Euclidean one on $\Sigma$.

From expression \eqref{eqn:fmbi 3-dim opposite}, the image of $\Phi$ is conformal to the flat metric on the annulus if and only if
\begin{align}
k^2(a^2 - b^2) - c_1^2 = 0.
\end{align}
This requires $a^2 - b^2 > 0$. For fixed $H_0 < -1$ and $k \geqslant 1$, both solutions of equation \eqref{eqn: quadratic eq reduced} are decreasing in $T>0$. Passing to the limit $T \to +\infty$ in equation \eqref{eqn: quadratic eq reduced}, we obtain solutions $\left\{ \dfrac{k}{H_0},\, k \right\}$. Hence $\sigma_k^1(H_0,T) > k$ (since it is decreasing in $T$) and $\left| \dfrac{b}{a} \right| > k \cdot \dfrac{|H_0|}{k} > 1$, which implies $a^2 - b^2 < 0$, a contradiction. Therefore, if $\epsilon = -1$, the FBMI can only be induced by eigenfunctions of strictly positive frequencies and the image lies in a two-dimensional subspace.

For each $k \geqslant 1$, $T>0$, and $H_0<0$, let $\sigma$ be the positive solution of \eqref{eqn: quadratic eq reduced}. Setting $a=1$, the boundary conditions \eqref{eqn:boundary condition flat} give $b = -\dfrac{\sigma H_0}{k} > 0$. Define $\Phi$ as
\begin{align}
\label{eqn:2-dim form}
\Phi(t,\theta) = \bigl( f(t)\cos(k\theta),\; f(t)\sin(k\theta) \bigr),
\end{align}
where $f(t) = \cosh(kt) + b\sinh(kt)$. Since $b>0$, one readily checks that $\Phi$ is a conformal immersion into $\mathbb{E}^2$. Thus, for any $k \geqslant 1$ and $T>0$, we obtain an FBMI from $(\Sigma^2,g)$ with $g_{T} \in [g]$ into the annulus bounded by two concentric circles in $\mathbb{E}^2$.

Setting $a=1$, the radii of these concentric circles are
\begin{align*}
r^2 = f^2(0) = 1,\qquad R^2 = f^2(T) = \bigl(\cosh(kT) + b\sinh(kT)\bigr)^2.
\end{align*}
Clearly, for any fixed $T>0$ and $k \geq 1$,
\begin{align*}
\lim_{b \to +\infty} \frac{R}{r} = \infty.
\end{align*}
This completes the proof for the case $\epsilon = -1$.

We now consider the case $\epsilon = 1$, which implies $H_0 > 0$. Note that in this case the coordinate functions of $\Phi$ cannot be induced by a single positive frequency $k \geqslant 1$. Indeed, if that were the case, the image of $\Phi$ would lie in $\mathbb{E}^2$, take the form \eqref{eqn:2-dim form}, and intersect both concentric circles from the interior. Such a $\Phi$ clearly cannot be an immersion.

We first assume that the coordinate functions of $\Phi$ are spanned only by eigenfunctions of positive frequencies. Equation \eqref{eqn: quadratic eq reduced} then yields a pair of positive roots for each $k \geqslant 1$. Using an argument analogous to \cite[Lemma 2.1]{fan2015extremal}, we obtain the following: for any fixed $k_1 \geqslant 1$ and $T > 0$, and for any sufficiently large $k_2 > k_1$, there exists $H_0 > 0$ (depending on $k_2$ and $T$) such that
\begin{align}
\label{eqn:repeated eigenvalue}
\sigma_{k_1}^1(H_0,T) = \sigma_{k_2}^0(H_0,T).
\end{align}
The same statement holds if the roles of $H_0$ and $T$ are interchanged.

Set $f_i(t) = a_i \cosh(k_i t) + b_i \sinh(k_i t)$ for $i = 1,2$, and consider the embedding
\begin{align}
\label{eqn:embedding example 1}
\Phi(t,\theta) = \begin{pmatrix}
f_1(t)\cos(k_1\theta)\\
f_1(t)\sin(k_1\theta)\\
f_2(t)\cos(k_2\theta)\\
f_2(t)\sin(k_2\theta)
\end{pmatrix}.
\end{align}
Suppose $\Phi$ is a conformal map whose coordinate functions are spanned by eigenfunctions of frequencies $k_2 > k_1 > 0$. Then, after an orthogonal transformation of the ambient space $\mathbb{E}^{n}$, it necessarily takes the form \eqref{eqn:embedding example 1}. The proof is entirely analogous to the reasoning following formula \eqref{eqn:fmbi 3-dim opposite}.

From the conformality condition we obtain
\begin{align}
\label{eqn:conformal condition}
k_1^2(a_1^2 - b_1^2) + k_2^2(a_2^2 - b_2^2) = 0.
\end{align}
The map $\Phi$ is degenerate only if $f_1'(t_0) = f_2'(t_0) = 0$ for some $t_0 \in [0,T]$. This gives
\begin{align*}
\frac{b_1}{a_1} = -\tanh(k_1 t_0),\qquad \frac{b_2}{a_2} = -\tanh(k_2 t_0).
\end{align*}
However, the first equation in \eqref{eqn:boundary condition flat} yields
\begin{align*}
\frac{k_1 b_1}{a_1} = \frac{k_2 b_2}{a_2}.
\end{align*}
This is impossible because the function $x \mapsto x \tanh(c x)$ is strictly increasing for $x > 0$ when $c > 0$.

To construct examples satisfying the conformal condition \eqref{eqn:conformal condition}, fix $H_0 > 0$ and $k_1 \geqslant 1$. Note that the eigenvalues $\sigma_{k_1}^1$ are the larger roots of \eqref{eqn: quadratic eq reduced} and decrease monotonically with respect to $T > 0$. Analogously to the case $\epsilon = -1$ treated above, we obtain
\begin{align*}
\sigma_{k_1}^1(H_0,T) > \max\left\{\frac{k_1}{H_0},\, k_1\right\} \geqslant \frac{k_1}{H_0},\qquad \forall T>0.
\end{align*}
Consequently, $\left|\dfrac{b_1}{a_1}\right| = \dfrac{\sigma_{k_1}^1 H_0}{k_1} > 1$.

Similarly, we have $\sigma_{k_2}^0(H_0,T) < \dfrac{k_2}{H_0}$. Indeed, since $\sigma_{k_2}^1(H_0,T)$ is decreasing in $T$ and $\sigma_{k_2}^0(H_0,T)\,\sigma_{k_2}^1(H_0,T) = k_2^2/H_0$, it follows that $\sigma_{k_2}^0(H_0,T)$ is increasing in $T$ and
\begin{align*}
\sigma_{k_2}^0(H_0,T) < \min\left\{\frac{k_2}{H_0},\, k_2\right\} \leqslant \frac{k_2}{H_0},\qquad \forall T>0.
\end{align*}
Thus we always have
\begin{align*}
\left|\frac{b_2}{a_2}\right| = \frac{\sigma_{k_2}^0 H_0}{k_2} < 1,
\end{align*}
which implies $a_2^2 - b_2^2 > 0$.

Hence, fix $k_1, T > 0$. For sufficiently large $k_2$, choose $H_0 > 0$ such that \eqref{eqn:repeated eigenvalue} holds. Then for any pair $(a_1, b_1)$ with $a_1 b_1 \neq 0$, we may select $a_2, b_2$ so that both \eqref{eqn:repeated eigenvalue} and \eqref{eqn:conformal condition} are satisfied.

The radii of the concentric spheres are given by
\begin{align*}
r^2 &= |\Phi(0)|^2 = a_1^2 + a_2^2,\\
R^2 &= |\Phi(T)|^2 = \bigl(a_1\cosh(k_1 T)+b_1\sinh(k_1 T)\bigr)^2 + \bigl(a_2\cosh(k_2 T)+b_2\sinh(k_2 T)\bigr)^2.
\end{align*}
We show that for any $T>0$, one can always find an FBMI of the form \eqref{eqn:embedding example 1} lying between spheres of distinct radii.

First, observe that when computing the ratio $R/r$, we may set $a_1=1$. Fix $T>0$ and $k_1\geqslant 1$, and let $k_2$ be sufficiently large. Define $s_i = \dfrac{\sigma H_0}{k_i}$, where $\sigma = \sigma_{k_1}^1(H_0,T)=\sigma_{k_2}^0(H_0,T)$. Then $s_1,s_2$ satisfy the system
\begin{align}
\label{eqn:4 dim eigenvalue normalized}
\begin{cases}
s_1^2 - (1+H_0)\coth(k_1T)\,s_1 + H_0 = 0,\\[4pt]
s_2^2 - (1+H_0)\coth(k_2T)\,s_2 + H_0 = 0.
\end{cases}
\end{align}
For fixed $k_1$ and $T$, the first quadratic equation in $s_1$ is completely determined by $H_0$; the second depends on $k_2$ and $H_0$. This system is overdetermined: the relation $H_0\sigma = s_1k_1 = s_2k_2$ implies that, given $k_2$, compatibility holds only for special values of $H_0$ (depending on $k_2$). As roots of quadratics, $s_1$ is the larger root of the first equation, while $s_2$ is the smaller root of the second. To find parameters that make the system compatible, we examine the asymptotic behavior of the solutions independently.

First, if we fix $H_0>0$, the second equation shows that $s_2$ is bounded below by a positive constant as $k_2\to\infty$. Consequently, $\sigma = s_2k_2/H_0 \to \infty$ as $k_2\to\infty$. Next, fix $k_i$ and $T$, and let $H_0\to 0^+$. In this limit, the larger roots of both quadratics tend to $\coth(k_iT)>0$, while the smaller roots tend to $0$ at a rate of $O(H_0)$. Now fix $k_1,k_2,T$ and solve for $\sigma$ from the two equations in \eqref{eqn:4 dim eigenvalue normalized} separately. As $H_0\to0^+$, the first equation yields $\sigma = s_1k_1/H_0\to\infty$, whereas the second gives $\sigma = s_2k_2/H_0$ which remains bounded. For fixed $k_1$, $T>0$, and $\delta>0$, and for each sufficiently large $k_2$, there exists some $H_0\in(0,\delta)$ (depending on $k_2$) such that system \eqref{eqn:4 dim eigenvalue normalized} is compatible. Thus, for fixed $k_1$ and $T>0$, and for all sufficiently large $k_2$, a positive parameter $H_0$ (depending on $k_2$) that ensures compatibility exists and can be chosen so that
\begin{align*}
\lim_{k_2\to\infty} H_0 = 0.
\end{align*}

As $H_0 \to 0^+$, since $s_1$ is the larger solution of the first equation in \eqref{eqn:4 dim eigenvalue normalized}, we have
\begin{align*}
\lim_{H_0\to 0^+} s_1 = \coth(k_1 T).
\end{align*}
With $a_1 = 1$, the first equation of \eqref{eqn:boundary condition flat} then implies
\begin{align*}
\lim_{k_2\to\infty} b_1 = \lim_{H_0\to 0^+} (-s_1) = -\coth(k_1 T).
\end{align*}
Since $s_2$ is the smaller solution of the corresponding equation in \eqref{eqn:4 dim eigenvalue normalized}, we obtain (also from \eqref{eqn:boundary condition flat})
\begin{align*}
\lim_{k_2\to\infty} \frac{b_2}{a_2} = \lim_{H_0\to 0^+} (-s_2) = 0.
\end{align*}
Equation \eqref{eqn:conformal condition} then forces $a_2^2$ to decay like $1/k_2^2$ as $k_2\to\infty$. Consequently,
\begin{align*}
\bigl(a_2\cosh(k_2 T)+b_2\sinh(k_2 T)\bigr)^2 = a_2^2\left(\cosh(k_2 T) + \frac{b_2}{a_2}\sinh(k_2 T)\right)^2
\end{align*}
behaves asymptotically as $e^{2k_2 T}/k_2^2$, which tends to $+\infty$ as $k_2\to\infty$. Therefore, for fixed $k_1 \geq 1$ and $T>0$,
\begin{align}
\lim_{k_2\to\infty} \frac{R}{r} = \infty.
\end{align}

Now consider the case $H_0>0$ and suppose that eigenfunctions of frequency $k=0$ (i.e., functions of the form $l(t)=c_1 t+c_2$) appear as factors of some $\phi_i$. Using formula \eqref{eqn:linear eigenvalue}, for fixed $H_0$, the function $\sigma(T)$ decreases from $+\infty$ to $0$ as $T>0$ increases. Moreover, as in \cite[Lemma 2.1]{fan2015extremal}, we have $\sigma_0(T) < \sigma_k^1(T)$ for all $k \geqslant 1$ and $T>0$. For any $T>0$ and $k \geqslant 1$, we can only possibly have $\sigma_0(T) = \sigma_k^0(T)$, and the eigenspace of $\sigma_0(T)$ has dimension at most $3$. Thus, as in the case $H_0<0$, after an orthogonal transformation the ambient space of an FBMI with boundaries on concentric spheres that involves linear eigenfunctions is three-dimensional and takes the form
\begin{align}
\label{eqn:fbmi 3 dimension}
\Phi(t,\theta) = \begin{pmatrix}
f(t)\cos k\theta\\
f(t)\sin k\theta\\
t + c_2
\end{pmatrix},
\end{align}
where $f(t)=a\cosh(kt)+b\sinh(kt)$ is an eigenfunction corresponding to $\sigma_k^0(T)$ with $a > -b \geqslant 0$. The conformality condition gives
\begin{align}
\label{eqn:conformal condition 2}
k^2(a^2 - b^2) - 1 = 0.
\end{align}
Under this condition, $\Phi$ is degenerate only when $\langle \Phi_\theta,\Phi_\theta\rangle = 0$, which occurs when $f(t)=0$. If $f(t_0)=0$, then $\dfrac{b}{a} = -\coth(kt_0) < -1$. However, from \eqref{eqn:conformal condition 2} we have $a^2 = \dfrac{1}{k^2} + b^2$, which yields a contradiction. Hence $\Phi$ is always an immersion.

We now analyze the ratio of the sphere radii $r/R$ in this case. From the boundary conditions,
\begin{align*}
l(0)=c_2 = -\frac{T}{H_0+1},\qquad l(T)=T+c_2 = \frac{H_0 T}{H_0+1},
\end{align*}
we obtain
\begin{align*}
|\Phi(0)|^2 = \frac{T^2}{(H_0+1)^2} + a^2,\qquad 
|\Phi(T)|^2 = \bigl(a\cosh kT + b\sinh kT\bigr)^2 + \frac{H_0^2 T^2}{(H_0+1)^2}.
\end{align*}
Set $s_k = \dfrac{\sigma_k^0 H_0}{k} = \dfrac{H_0+1}{T k}$. We have the system
\begin{align}
\label{eqn:3 dim eigenvalue system}
\begin{cases}
s_k = \dfrac{H_0+1}{T k},\\[6pt]
s_k^2 - (1+H_0)\coth(Tk)\,s_k + H_0 = 0.
\end{cases}
\end{align}
Since $T$ always appears as the product $Tk$, $s_k$ depends on $H_0$ and $Tk$ rather than on $k$ alone. This system is overdetermined: for a given $H_0>0$, compatibility holds only for special values of $Tk$, and those values are the same for all $k \geq 1$. Thus, it suffices to consider $k=1$. For $k=1$ we obtain
\begin{align}
\label{eqn:parameter relation 3-dim}
\frac{1}{T^2} - \frac{\coth T}{T} + \frac{H_0}{(H_0+1)^2} = 0.
\end{align}
One checks that the term $\dfrac{1}{T^2} - \dfrac{\coth T}{T}$ is negative and monotonically increasing in $T>0$, while $\dfrac{H_0}{(H_0+1)^2}$ attains its maximum $1/4$ for $H_0>0$. Moreover,
\begin{align*}
\lim_{T\to +\infty} \left( \frac{1}{T^2} - \frac{\coth T}{T} \right) = 0.
\end{align*}
Hence, for each $H_0>0$, equation \eqref{eqn:parameter relation 3-dim} has a unique solution $T \in [T_0,\infty)$, where $T_0>0$ satisfies
\begin{align}
\label{eqn: T0 value}
\frac{1}{T_0^2} - \frac{\coth T_0}{T_0} + \frac{1}{4} = 0.
\end{align}
We have $\displaystyle\lim_{H_0\to\infty} \frac{T(H_0)}{H_0}=1$ and $\displaystyle\lim_{H_0\to 0} H_0 T(H_0) = 1$. For $k=1$, from \eqref{eqn:3 dim eigenvalue system} and \eqref{eqn:boundary condition flat},
\begin{align*}
\lim_{H_0\to 0} \frac{-b}{a} = \lim_{H_0\to 0} \frac{H_0+1}{T} = 0.
\end{align*}
Thus $a\to 1$ and $b\to 0^-$ as $H_0\to 0$ (see \eqref{eqn:conformal condition 2}). As $H_0\to 0$ we have $T\to\infty$, so $|\Phi(0)|^2$ grows like $T^2$ while $|\Phi(T)|^2$ grows like $e^{2T}$. Consequently,
\begin{align*}
\lim_{H_0\to 0} \frac{|\Phi(0)|}{|\Phi(T)|} = 0.
\end{align*}
Having summarized all the arguments, we complete the proof.

\begin{remark}
\label{remark 1}
For equation \eqref{eqn:parameter relation 3-dim}, for each fixed $H_0>0$ there exists a unique $T \geqslant T_0$ satisfying it (by monotonicity in $T$). Conversely, for any given $T > T_0$, there are two values of $H_0$ solving \eqref{eqn:parameter relation 3-dim} (viewed as a quadratic in $H_0$).

If we replace $T$ by $Tk$ in \eqref{eqn:parameter relation 3-dim} and in the subsequent reasoning, we obtain an analog of the $4$-dimensional case given by \eqref{eqn:embedding example 1}: for any fixed conformal class on the annulus (parametrized by $T>0$), an FBMI into $\mathbb{E}^{n}$ lying in a $3$-dimensional subspace with boundaries on concentric spheres, given by \eqref{eqn:fbmi 3 dimension}, exists for all sufficiently large $k \geq 1$. Moreover, by letting $H_0 \to 0^+$, we can arrange $\displaystyle\lim_{k\to\infty} \frac{R}{r} = \infty$.

We also note that the solution $T_0$ of \eqref{eqn: T0 value} corresponds exactly to the conformal class that defines the critical catenoid in \cite[Section 3]{fraser2011first}, i.e., $\dfrac{T_0}{2} = \coth(T_0/2)$. For $k=1$, the FBMI $\Phi$ given by \eqref{eqn:fbmi 3 dimension} is an embedding and is uniquely determined when $T = T_0$. In this case, the image of $\Phi$ is the critical catenoid cut by a single sphere in $\mathbb{E}^3$, as in \cite{fraser2011first}.
\end{remark}

\section{Index Bounds and Morse Theory}\label{sec:index}

The aim of this section is to demonstrate that the framework for the spectral index, developed in~\cite{medvedev2023index,medvedev2025free}, can be adapted to the setting of \textit{compact} FBMI with boundary in a collection of concentric spheres. This framework also allows us to derive explicit lower and upper bounds for the Morse index. Furthermore, we determine the exact Morse index of a 
$k$-dimensional flat annulus embedded in a spherical shell.

\subsection{Spectral Index}

To establish a variational characterization of the eigenvalues for Problem~\eqref{eq:steklov_weight}, we follow~\cite{karpukhin2023weyl} and introduce the function space
\[
W_\partial(\Sigma,g,\rho) := \left\{ f \in C^\infty(\bar{\Sigma}) : \|f\|_{W^{1,2}_\partial(\Sigma,g,\rho)} < +\infty \right\}, 
\]
where the squared norm is defined as
\[
\|f\|_{W^{1,2}_\partial(\Sigma,g,\rho)}^2 := \int_\Sigma |\nabla^g f|^2 \,dv_g + \int_{\partial\Sigma} f^2 |\rho| \,ds_g.
\]

Next, we define the space $W^{1,2}_\partial(\Sigma,g,\rho)$ as the closure of $W_\partial(\Sigma,g,\rho)$ with respect to the  $\|\cdot\|_{W^{1,2}_\partial(\Sigma,g,\rho)}$-norm. The variational characterization of the \textit{non-zero} eigenvalues $\sigma^\pm_j(\Sigma,g,\rho)$ is then given by
\begin{equation}\label{var}
    \frac{\pm 1}{\sigma^\pm_j(\Sigma,g,\rho)} = \min_{F_j} \max_{u \in F_j \cap X^\pm \setminus \{0\}} \pm \dfrac{\displaystyle\int_{\partial \Sigma} u^2 \rho \,ds_g}{\displaystyle\int_{\Sigma} |\nabla^g u|^2 \,dv_g},
\end{equation}
where $X \subset W^{1,2}_\partial(\Sigma,g,\rho)$ is the subspace of functions orthogonal to the constant function with respect to the weight $\rho$ (i.e., $\displaystyle\int_{\partial\Sigma} u \rho \,ds_g = 0$), $F_j$ is a subspace of $X$ of codimension $j-1$, and we have introduced the subsets
\begin{align*}
    X^+ &:= \left\{ f \in X : \int_{\partial \Sigma} f^2 \rho \,ds_g > 0 \right\}, \\
    X^- &:= \left\{ f \in X : \int_{\partial \Sigma} f^2 \rho \,ds_g < 0 \right\}.
\end{align*}
(Note that $X^\pm$ are cones rather than linear spaces due to the strict inequalities.)

Given the explicit form of $\rho$ in~\eqref{rho}, we can rewrite \eqref{var} as
\begin{equation}\label{var+}
    \frac{1}{\sigma^+_j(\Sigma,g,\rho)} = \min_{F_j} \max_{u \in F_j \cap X^+ \setminus \{0\}} \dfrac{\displaystyle\sum_\alpha \frac{1}{R_\alpha} \int_{\partial_{+}^\alpha \Sigma} u^2 \,ds_g - \sum_\beta \frac{1}{r_\beta} \int_{\partial_{-}^\beta \Sigma} u^2 \,ds_g}{\displaystyle\int_{\Sigma} |\nabla^g u|^2 \,dv_g},
\end{equation}
and
\begin{equation}\label{var-}
    \frac{-1}{\sigma^-_j(\Sigma,g,\rho)} = \min_{F_j} \max_{u \in F_j \cap X^- \setminus \{0\}} \dfrac{-\displaystyle\sum_\alpha \frac{1}{R_\alpha} \int_{\partial_{+}^\alpha \Sigma} u^2 \,ds_g + \sum_\beta \frac{1}{r_\beta} \int_{\partial_{-}^\beta \Sigma} u^2 \,ds_g}{\displaystyle\int_{\Sigma} |\nabla^g u|^2 \,dv_g},
\end{equation}
where the subsets $X^\pm$ take the explicit form:
\begin{align*}
    X^+ &:= \left\{ f \in X : \sum_\alpha \frac{1}{R_\alpha} \int_{\partial_{+}^\alpha \Sigma} f^2 \,ds_g - \sum_\beta \frac{1}{r_\beta} \int_{\partial_{-}^\beta \Sigma} f^2 \,ds_g > 0 \right\}, \\
    X^- &:= \left\{ f \in X : \sum_\alpha \frac{1}{R_\alpha} \int_{\partial_{+}^\alpha \Sigma} f^2 \,ds_g - \sum_\beta \frac{1}{r_\beta} \int_{\partial_{-}^\beta \Sigma} f^2 \,ds_g < 0 \right\}.
\end{align*}

We are now ready to define the spectral index, following the framework established in~\cite{karpukhin2022laplace,medvedev2023index,medvedev2025free}.

\begin{definition}\label{def:ind_spec}
We define the \emph{spectral index} $\Ind_S(\Sigma,g,\rho)$ as the number of non-negative eigenvalues $\sigma^+_j(\Sigma,g,\rho)$ (counted with multiplicity) that are strictly less than $1$. Equivalently, for an FBMI in a collection of concentric balls, the spectral index is the maximal dimension of a subspace $W \subset X$ on which the quadratic form
\begin{equation}\label{eq:quadratic_form}
S_S(u,u) = \int_\Sigma |\nabla^g \widehat{u}|_g^2 \, dv_g - \int_{\partial \Sigma} u^2 \rho \, ds_g
\end{equation}
is negative definite, where $\widehat{u}$ denotes the harmonic extension of $u$ from $\partial\Sigma$ to $\Sigma$.
\end{definition}

\begin{remark}
\begin{enumerate}[(i)]
    \item It follows from the definition of $X^-$ that the quadratic form $S_S$ is positive definite on $X^-$. Similarly, $S_S$ can only take negative values when restricted to $X^+$.
    \item The equivalence of the above two definitions follows immediately from the divergence theorem: if $u$ is a non-zero solution to \eqref{eq:steklov_weight}, then
    $$
    S_S(u,u)=(\sigma-1)\int_{\partial \Sigma} u^2 \rho \, ds_g.
    $$
    As we said above, $u \in X^+$. Therefore, $\displaystyle\int_{\partial \Sigma} u^2 \rho \, ds_g>0$ and $\sigma$ must be less than $1$ in order to have $S_S(u,u)<0$. Then by the variational characterization~\eqref{var}, $\sigma>0$.
\end{enumerate}
\end{remark}

For simplicity, we will denote the spectral index by $\Ind_S(\Sigma)$ when the metric $g$ and weight $\rho$ are clear from the context. 
 
\subsection{Index Upper Bounds} By standard arguments (see~\cite{medvedev2023index,medvedev2025free}), we can prove the following proposition.

\begin{proposition}\label{prop:spec_ind}
Let $\Sigma$ be an $m$-dimensional free boundary minimal submanifold with boundary on a collection of concentric spheres in $\mathbb{E}^{n}$. Then we have
\[
\Ind_E(\Sigma) \leqslant n \Ind_S(\Sigma).
\]
\end{proposition}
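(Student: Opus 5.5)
The plan is the standard componentwise argument of Karpukhin and Medvedev–Ma. The energy index form $S_E$ acts on $\mathbb{E}^n$-valued vector fields $X=(X_1,\dots,X_n)$ along $\Sigma$. Since $\nabla$ is the flat Euclidean connection, it splits as a sum of scalar forms:
\[
S_E(X,X)=\sum_{i=1}^n\Big(\int_\Sigma|\nabla^g X_i|^2\,dv_g-\int_{\partial\Sigma}X_i^2\rho\,ds_g\Big)=\sum_{i=1}^n Q(X_i,X_i),
\]
where $\rho$ is the weight \eqref{rho}. The claim therefore reduces to a statement about the scalar form $Q(u,u)=\int_\Sigma|\nabla^g u|^2-\int_{\partial\Sigma}u^2\rho$ on $W^{1,2}_\partial(\Sigma,g,\rho)$. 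Specifically, I would show that the maximal dimension of a negative subspace for $Q$ is at most $\Ind_S(\Sigma)$. Then a negative subspace for $S_E$ of dimension larger than $n\Ind_S(\Sigma)$ would be ruled out by a linear algebra argument.

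\textbf{Step 1: index of $Q$ equals $\Ind_S$.} Split $u=\widehat u+(u-\widehat u)$, where $\widehat u$ is the harmonic extension of $u|_{\partial\Sigma}$. The function $u-\widehat u$ vanishes on $\partial\Sigma$, so by Dirichlet orthogonality $Q(u,u)=S_S(u,u)+\int_\Sigma|\nabla(u-\widehat u)|^2\geqslant S_S(u,u)$. Hence any negative subspace $V$ for $Q$ injects via $u\mapsto u|_{\partial\Sigma}$: an element with zero trace has $Q(u,u)=\int|\nabla u|^2\geqslant 0$, which forces $u=0$ in $V$. The image is a negative subspace for $S_S$.

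One must also handle the constants. The constant function gives $Q(1,1)=-\int_{\partial\Sigma}\rho$, which may be negative; this is why Definition~\ref{def:ind_spec} counts \emph{non-negative} eigenvalues, so that $\sigma_0=0$ is included. I would diagonalize $S_S$ on harmonic functions using the full Steklov eigenbasis for weight $\rho$, consisting of the eigenfunctions for $\sigma_j^\pm$ together with the constants. The eigenfunctions are $S_S$-orthogonal, and the computation $S_S(u,u)=(\sigma-1)\int u^2\rho$ from the Remark shows that the negative directions are exactly those with $0\leqslant\sigma<1$ coming from $X^+$ (plus the constants, when counted). So $\operatorname{ind}(Q)\leqslant \Ind_S(\Sigma)$.

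\textbf{Step 2: componentwise counting.} Let $E_-$ be the span of Steklov eigenfunctions with eigenvalues in $[0,1)$ (with the conventions above), so $\dim E_-=\Ind_S$. Suppose $V$ is a subspace of vector fields with $S_E<0$ on $V\setminus\{0\}$ and $\dim V>n\Ind_S$. Consider the linear map $V\to (E_-)^n$ that sends $X$ to the tuple of $\rho$-weighted boundary projections of its components $X_i|_{\partial\Sigma}$ onto $E_-$. Its kernel is nontrivial. For $X$ in the kernel, each trace $X_i|_{\partial\Sigma}$ lies in the $S_S$-orthogonal complement of $E_-$, where $S_S\geqslant 0$. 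Step 1 then gives $Q(X_i,X_i)\geqslant 0$, so $S_E(X,X)\geqslant 0$, a contradiction.

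\textbf{Main obstacle.} The delicate point is that orthogonality must be taken with respect to the indefinite weight $\rho$. I have to check that the $\rho$-orthogonal complement of $E_-$ within harmonic functions is genuinely $S_S$-nonnegative. This includes the $X^-$ eigenfunctions, where $S_S$ is positive by Remark (i), and requires completeness of the eigenbasis in $W^{1,2}_\partial$, which comes from the spectral theory of \cite{karpukhin2023weyl,girouard2021continuity}. I would verify this by expanding a harmonic $u$ in the eigenbasis and computing $S_S(u,u)=\sum(\sigma_j-1)c_j^2\int\phi_j^2\rho$. Each term with $\sigma_j<0$ or $\sigma_j\geqslant1$ is non-negative, because for $\sigma^-$ eigenfunctions $\int\phi^2\rho<0$ and $\sigma-1<0$.
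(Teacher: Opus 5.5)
Your proposal is correct and follows the paper's proof: you split $S_E$ componentwise, apply the Dirichlet principle $Q(u,u)\geqslant S_S(\widehat u,\widehat u)$, and use a dimension count to make each component $\rho$-orthogonal to the span of eigenfunctions with $0\leqslant\sigma<1$ (constants included), on whose complement $S_S\geqslant 0$. Your eigenfunction expansion supplies the justification the paper only asserts. Note that what you need is completeness of the eigenfunctions within $X$, not all of $W^{1,2}_\partial$: when $\int_{\partial\Sigma}\rho\,ds_g=0$, as for the flat $2$-dimensional annulus, the harmonic $w$ with $\partial_\eta w=\rho$ is a generalized eigenvector outside their closed span. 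This is harmless, because orthogonality to the constant already places each component in $X$.
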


\begin{proof} 
Let $W$ be a maximal subspace on which the quadratic form $S_S$ is negative definite, so that $\dim W = \Ind_S(\Sigma)$. Suppose, for the sake of contradiction, that $\Ind_E(\Sigma) > n \Ind_S(\Sigma)$. By a standard dimension-counting argument, there exists a non-trivial vector field $X$ such that $S_E(X,X) < 0$ and each component $X^i$ of $X$ is orthogonal to $W$ (with respect to the relevant boundary inner product). This orthogonality implies that $S_S(X^i, X^i) \geqslant 0$ for all $i = 1, \ldots, n$.

Observe that by the Dirichlet principle (the minimizing property of the harmonic extension operator $f \mapsto \widehat{f}$), we have
\[
\int_{\Sigma} |\nabla^g \widehat{f}|^2 \, dv_g \leqslant \int_{\Sigma} |\nabla^g f|^2 \, dv_g
\]
for any sufficiently regular function $f$. Applying this to each component of $X$, we obtain
\[
\int_{\Sigma} |\nabla^g \widehat{X}^i|^2 \, dv_g \leqslant \int_{\Sigma} |\nabla^g X^i|^2 \, dv_g, \quad \text{for } i = 1, \ldots, n.
\]
Consequently, we have the following chain of inequalities:
\[
0 > S_E(X, X) \geqslant \sum_{i=1}^n S_S(X^i, X^i) \geqslant 0,
\]
which yields a contradiction.
\end{proof}

Furthermore, we apply a result of Lima~\cite{lima2023bounds}, which states that for a compact $2$-dimensional FBMS in \textit{any} ambient Riemannian manifold,
\[
\Ind(\Sigma) \leqslant \Ind_E(\Sigma) + \dim \mathcal{M}(\Sigma),
\]
where $\mathcal{M}(\Sigma)$ denotes the moduli space of conformal classes on $\Sigma$. This yields the upper bound in Theorem~\ref{thm:ind_M}, since a collection of concentric spheres can always be embedded into a smooth bounded domain in Euclidean space. 
 
 \subsection{Index Lower Bounds}
 
 The following theorem is a direct adaptation of Theorem A in~\cite{ambrozio2018index}.  

\begin{theorem}\label{ACS}
Let $M^n$ be a compact, orientable, connected, and properly embedded free boundary minimal hypersurface with boundary on a collection of concentric spheres in $\mathbb{E}^{n+1}$, such that $M$ meets the boundary spheres from the convex side. Then
\[
\Ind(M) \geqslant \frac{2}{n(n+1)} \dim H^1(M,\partial M;\mathbb{R}) \geqslant \frac{2}{n(n+1)}(b-1),
\]
where $H^1(M,\partial M;\mathbb{R})$ denotes the first relative cohomology group with real coefficients, and $b$ is the number of boundary components of $M$. In particular, when $n=2$, we have
\[
\Ind(M) \geqslant \frac{1}{3}(2\gamma + b - 1),
\]
where $\gamma$ is the genus of $M$.
\end{theorem}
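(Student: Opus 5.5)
We follow the strategy of Ambrozio--Carlotto--Sharp (Theorem A in~\cite{ambrozio2018index}) and only check that each step survives when the single unit sphere is replaced by a collection of concentric spheres met from the convex side. Represent each class in $H^1(M,\partial M;\mathbb{R})$ by a harmonic one-form, or dually a harmonic vector field $\xi$ on $M$, satisfying the relative boundary conditions. By relative Hodge theory these form a space $\mathcal{H}$ with $\dim\mathcal{H}=\dim H^1(M,\partial M;\mathbb{R})$. Let $\nu$ be the unit normal of $M$ and $\{e_1,\dots,e_{n+1}\}$ the standard basis of $\mathbb{E}^{n+1}$. For $\xi\in\mathcal{H}$ and $i<j$ consider the normal test sections
\[
V_{ij}^\xi=\langle \nu\wedge\xi, e_i\wedge e_j\rangle\,\nu=\bigl(\langle\nu,e_i\rangle\langle\xi,e_j\rangle-\langle\nu,e_j\rangle\langle\xi,e_i\rangle\bigr)\nu .
\]
There are $\binom{n+1}{2}=\tfrac{n(n+1)}{2}$ such sections for each $\xi$.

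The main computation is to sum the second variation \eqref{index_S} over all pairs $i<j$. The interior part is identical to the computation in~\cite{ambrozio2018index}, since it uses only the Gauss and Codazzi equations in flat space and the harmonicity of $\xi$. It gives
\[
\sum_{i<j}\int_M\bigl(|\nabla^\perp V_{ij}^\xi|^2-|A|^2|V_{ij}^\xi|^2\bigr) = -\int_M |A|^2|\xi|^2 + (\text{boundary terms}).
\]
The boundary terms come from integrating by parts on each component $\partial_+^\alpha M\subset\mathbb{S}^n(R_\alpha)$. Free boundary means the conormal $\eta$ equals the position vector divided by $R_\alpha$. The relative condition makes $\xi$ tangent to $\partial M$, and the second fundamental form of $\mathbb{S}^n(R_\alpha)$ is $\tfrac{1}{R_\alpha}g$. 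Hence the boundary integrand from the integration by parts is the same as in the unit-ball case with $1$ replaced by $1/R_\alpha$, and it cancels against the term $-\tfrac{1}{R_\alpha}\int|V|^2$ in \eqref{index_S}. The hypothesis that $M$ meets every sphere from the convex side is used exactly here. Only $+1/R_\alpha$ weights occur, the set of $\beta$-components is empty, and each boundary component behaves as in~\cite{ambrozio2018index} with curvature $1/R_\alpha$. Consequently
\[
\sum_{i<j}S(V_{ij}^\xi,V_{ij}^\xi)=-\int_M|A|^2|\xi|^2 .
\]
This is negative unless $A\equiv0$. The totally geodesic case is a flat annulus or disk, for which $H^1(M,\partial M)$ is at most one-dimensional and the estimate reduces to $\Ind\geqslant 0$ after rounding, so it can be treated separately or is vacuous.

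Next comes the standard linear-algebra step. Suppose $\Ind(M)<\tfrac{2}{n(n+1)}\dim\mathcal{H}$. Then the linear map
\[
\xi\mapsto\bigl(\text{projections of }V_{ij}^\xi\text{ onto the negative space}\bigr)_{i<j}
\]
has a nontrivial kernel. For $\xi$ in that kernel every $S(V_{ij}^\xi,V_{ij}^\xi)\geqslant0$, which contradicts the sum being negative. Unique continuation shows that $\xi\neq0$ forces $\int|A|^2|\xi|^2>0$ when $A\not\equiv0$. The topological bound $\dim H^1(M,\partial M)\geqslant b-1$ follows from the exact sequence $H^0(M)\to H^0(\partial M)\to H^1(M,\partial M)$. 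For $n=2$, Lefschetz duality gives $\dim H^1(M,\partial M)=\dim H_1(M)=2\gamma+b-1$.

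The main obstacle is the boundary bookkeeping in the summed second variation. One has to verify carefully that with radius $R_\alpha$ the boundary terms scale exactly as $1/R_\alpha$ on each component, so that cancellation happens componentwise. This works because the identity is homogeneous in the sphere curvature; the first thing to check is a rescaling of the unit-ball identity component by component.
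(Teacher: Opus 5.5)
Your plan (rerun the Ambrozio--Carlotto--Sharp argument with the sections $V_{ij}^\xi$ and check the boundary terms sphere by sphere) is the paper's second suggested route. However, the identity at its core, $\sum_{i<j}S(V_{ij}^\xi,V_{ij}^\xi)=-\int_M|A|^2|\xi|^2$, is false, and you have the role of the convexity hypothesis backwards.

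Write $V_{ij}^\xi=u_{ij}\nu$. Then $\sum_{i<j}u_{ij}^2=|\xi|^2$. Pointwise, $\sum_{i<j}\bigl(|\nabla u_{ij}|^2-|A|^2u_{ij}^2\bigr)=|\nabla\xi|^2-|A\xi|^2=|\nabla\xi|^2+\Ric(\xi,\xi)$ by the Gauss equation. For a harmonic field, Bochner's formula turns this into $\tfrac12\mydiv(\nabla|\xi|^2)$. So the interior part produces no $|A|^2$ term at all; it equals exactly $\int_{\partial M}\langle\nabla_\eta\xi,\xi\rangle\,ds$.

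Second, the harmonic fields representing $H^1(M,\partial M;\mathbb{R})$ (relative, or Dirichlet, conditions) are \emph{normal} to $\partial M$, $\xi=f\eta$, not tangent as you wrote. From $\mydiv\xi=0$ and $\eta=x/R_\alpha$ on $\partial_+^\alpha M$ one gets $\langle\nabla_\eta\xi,\xi\rangle=-\frac{n-1}{R_\alpha}|\xi|^2$ there. Hence
\[
\sum_{i<j}S(V_{ij}^\xi,V_{ij}^\xi)=-\sum_\alpha\Bigl(\frac{n-1}{R_\alpha}+\frac{1}{R_\alpha}\Bigr)\int_{\partial_+^\alpha M}|\xi|^2\,ds=-n\sum_\alpha\frac{1}{R_\alpha}\int_{\partial_+^\alpha M}|\xi|^2\,ds .
\]
The integration-by-parts term adds to $-\frac{1}{R_\alpha}\int|V|^2$ rather than cancelling it. For example, with $\xi=x/|x|^2$ on a planar annulus, the outer circle alone contributes $-4\pi/R^2\neq0$. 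Your homogeneity idea is right, but what rescales componentwise is the unit-ball identity $-n\int_{\partial M}|\xi|^2$, not zero.

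The theorem survives, but for this reason. Negativity comes from the boundary, namely from the mean curvature $n/R_\alpha>0$ of each sphere seen from the convex side. A concave-side component would contribute $+\frac{n}{r_\beta}\int|\xi|^2$, and that is where the hypothesis is really used. Strictness for $\xi\neq0$ follows because a harmonic field vanishing on $\partial M$ vanishes identically (a local harmonic potential has zero Cauchy data).

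No condition $A\not\equiv0$ is needed, so the totally geodesic case needs no separate treatment. Your remark there is also off: a positive lower bound for an integer gives $\Ind\geqslant1$, not $0$. Under the hypotheses the only totally geodesic $M$ is a flat disk, for which $H^1(M,\partial M;\mathbb{R})=0$.

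Finally, the relative condition matters for the stated bound. With tangent (absolute) fields the boundary term is $-\frac{2}{R_\alpha}\int|\xi|^2$, so the argument still runs. But it then bounds $\dim H^1(M;\mathbb{R})$, which for $n\geqslant3$ need not be $\geqslant b-1$ (e.g.\ $\mathbb{S}^{n-1}\times[0,1]$). Once the identity is corrected, your linear-algebra step and the topological inequalities go through as written.
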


One way to see that the proof applies to free boundary minimal hypersurfaces (FBMHs) in a collection of concentric balls is to construct a domain $\Omega \subset \mathbb{R}^{n+1}$ containing the FBMH. This can be achieved by suitably modifying or connecting the boundary spheres in neighborhoods of the intersection points to form a new bounding hypersurface. Since the FBMH meets the original spheres from the convex side by construction, this modified boundary can be chosen to preserve convexity at the points of intersection. Consequently, $\Omega$ naturally serves as the ambient Riemannian manifold with convex boundary required by Theorem A, making Theorem \ref{ACS} a direct corollary. 

Alternatively, one can establish Theorem \ref{ACS} directly, without relying on this geometric construction, by straightforwardly verifying that each step in the original proof of Theorem A remains valid under our specific geometric assumptions.

Theorem~\ref{ACS} demonstrates that the Morse index grows at least linearly with respect to the genus and the number of boundary components. 
 
Furthermore, we can also establish a lower bound in terms of the spectral index.

\begin{proof}[Proof of Theorem \ref{+n}]
Since $\Sigma$ is a free boundary minimal hypersurface with boundary on a collection of concentric spheres in $\mathbb{E}^{n}$, its $n$ coordinate functions $\phi_1, \dots, \phi_{n}$ are Steklov eigenfunctions with eigenvalue $1$. Note that these coordinate functions are linearly independent, provided that $\Sigma$ is not totally geodesic (i.e., not contained in a hyperplane). 

Suppose that $\Ind_S(\Sigma) = s$. By definition, there exist $s$ linearly independent eigenfunctions $u_1, \dots, u_s$ with corresponding eigenvalues $0 < \sigma_i^+ < 1$ for $i = 1,\dots, s$.

Observe that $\phi_1, \dots, \phi_{n}$ and $u_1, \dots, u_s$ all belong to $X^+$, since their corresponding eigenvalues are strictly positive. Moreover, the bilinear form $\langle f, h \rangle_\rho := \displaystyle\int_{\partial\Sigma} f h \rho \, ds_g$ defines a valid inner product on $X^+$. Moreover, it is clear that any eigenspace corresponding to a positive eigenvalue belongs to $X^+$. Then, by applying the Gram--Schmidt process to eigenfunctions corresponding to the same eigenvalue, and noting that eigenfunctions corresponding to distinct eigenvalues are automatically orthogonal with respect to this weighted $L^2$ inner product, we may assume without loss of generality that $u_1, \dots, u_k$ form an orthonormal set in $L^2(\partial\Sigma, \rho \, ds_g)$. 

Furthermore, because the $u_i$ have eigenvalues strictly less than $1$ and the $\phi_j$ have eigenvalue exactly $1$, the sets $\{u_1, \dots, u_s\}$ and $\{\phi_1, \dots, \phi_{n}\}$ are mutually orthogonal in $L^2(\partial\Sigma, \rho \, ds_g)$. 

Consider the subspace $W = \text{span}\{u_1, \dots, u_s, \phi_1, \dots, \phi_{n}\}$. Due to the mutual orthogonality and linear independence established above, we have $\dim W = s + n $. We claim that the index form $S$ \eqref{index_S} is negative definite on $W$. 

Indeed, let $\psi \in W$, so that $\psi =\displaystyle \sum_{i=1}^s \alpha_i u_i + \sum_{j=1}^{n} \beta_j \phi_j$. Since $\Sigma$ is a hypersurface, the index form $S$ evaluated on $\psi$ is given by
\begin{equation}\label{form}
S(\psi,\psi) = -\int_\Sigma (\Delta_g\psi + |B|^2\psi)\psi \, dv_g + \int_{\partial\Sigma} \left( \frac{\partial \psi}{\partial \eta} - \psi\rho \right)\psi \, ds_g,
\end{equation}
where $B$ is the second fundamental form of $\Sigma$.

Clearly, $\Delta_g\psi = 0$ because $\psi$ is a linear combination of harmonic functions. On the boundary $\partial\Sigma$, the normal derivative satisfies
\[
\frac{\partial \psi}{\partial \eta} = \left( \sum_{i=1}^s \alpha_i \sigma_i^+ u_i + \sum_{j=1}^{n} \beta_j \phi_j \right) \rho.
\]
Using the orthogonality of the basis functions in $L^2(\partial\Sigma, \rho \, ds_g)$ and the fact that $\displaystyle\int_{\partial\Sigma} u_i^2 \rho \, ds_g = 1$, we can evaluate the boundary integrals. First,
\begin{equation}\label{form1}
\int_{\partial\Sigma} \frac{\partial \psi}{\partial \eta} \psi \, ds_g = \sum_{i=1}^s \alpha_i^2 \sigma_i^+ + \int_{\partial\Sigma} \left( \sum_{j=1}^{n} \beta_j \phi_j \right)^2 \rho \, ds_g.
\end{equation}
Similarly,
\begin{equation}\label{form2}
\int_{\partial\Sigma} \psi^2 \rho \, ds_g = \sum_{i=1}^s \alpha_i^2 + \int_{\partial\Sigma} \left( \sum_{j=1}^{n} \beta_j \phi_j \right)^2 \rho \, ds_g.
\end{equation}
Subtracting \eqref{form2} from \eqref{form1}, the terms involving $\beta_j$ cancel out perfectly. Substituting this into \eqref{form} yields
\[
S(\psi,\psi) = -\int_\Sigma |B|^2 \psi^2 \, dv_g + \sum_{i=1}^s \alpha_i^2 (\sigma_i^+ - 1).
\]
Since $\sigma_i^+ < 1$ for all $i$, the second term is non-positive. The first term is also non-positive. If $S(\psi, \psi) = 0$, then we must have $\alpha_i = 0$ for all $i$, and $\displaystyle\int_\Sigma |B|^2 \psi^2 \, dv_g = 0$. This implies that $\psi = \displaystyle\sum_{j=1}^{n} \beta_j \phi_j$ vanishes on the support of $|B|^2$. Since $\psi$ is harmonic and $\Sigma$ is not totally geodesic (meaning $|B|^2$ is not identically zero), $\psi$ must vanish identically on $\Sigma$, forcing all $\beta_j = 0$. 

Thus, $S(\psi, \psi) < 0$ for all non-zero $\psi \in W$, proving that $S$ is negative definite on $W$. Therefore,
\[
\Ind(\Sigma) \geqslant \dim W = s + n  = \Ind_S(\Sigma) + n.
\]
\end{proof}

\subsection{Index of a Flat Annulus} \label{flat}

We denote the $m$-dimensional flat annulus with the inner radius $r$ and outer radius $R$ by $\mathbb A^m(r,R)$. Note that the spherical shell, i.e., the region between two concentric spheres, is also a flat annulus. We use this terminology when we assume that the dimension of the annulus coincides with the dimension of the ambient Euclidean space. We denote the outer and inner boundaries of $\mathbb{A}^m(r,R)$ by $\partial_{\mathrm{out}}\mathbb{A}^m(r,R)$ and $\partial_{\mathrm{in}}\mathbb{A}^m(r,R)$, respectively.

\begin{lemma}\label{sigma_1}
The coordinate functions of the annulus \(\mathbb{A}^{m}(r,R) \subset \mathbb{A}^n(r,R) \subset \mathbb{E}^n\) are $\sigma^+_1$-eigenfunctions. 
\end{lemma}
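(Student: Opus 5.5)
We will compute the whole positive part of the weighted Steklov spectrum of $\mathbb{A}^m(r,R)$ by separating variables, and then read off that the smallest positive eigenvalue is $1$ with eigenspace spanned by the coordinate functions. We assume $m\geqslant 2$. The annulus $\mathbb{A}^m(r,R)$ meets $\mathbb{S}^{n-1}(R)$ from the convex side and $\mathbb{S}^{n-1}(r)$ from the concave side, so by \eqref{rho} the weight is $\rho=1/R$ on $\partial_{\mathrm{out}}$ and $\rho=-1/r$ on $\partial_{\mathrm{in}}$.

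\textbf{Step 1: reduction to radial problems.} In polar coordinates $x=s\omega$ with $s\in[r,R]$ and $\omega\in\mathbb{S}^{m-1}$, the outward conormal is $\partial_s$ on the outer sphere and $-\partial_s$ on the inner sphere. Both boundary conditions of \eqref{eq:steklov_weight} therefore become the single condition
\[
s\,\partial_s u=\sigma u \quad \text{at } s=r \text{ and } s=R.
\]
Take an eigenfunction $u$ and expand $u(s,\cdot)$ in an $L^2(\mathbb{S}^{m-1})$-orthonormal basis of spherical harmonics $Y_{l,j}$, where $Y_{l,j}$ has degree $l$. Harmonicity of $u$ and the boundary condition are both preserved by projecting onto each $Y_{l,j}$, because the Laplacian and $s\partial_s$ commute with this projection. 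Hence every nonzero coefficient $f_{l,j}(s)Y_{l,j}(\omega)$ is itself an eigenfunction with the same eigenvalue $\sigma$. The main technical point is to justify this term-by-term projection for a $W^{1,2}$ eigenfunction. It is routine, since eigenfunctions are smooth up to the boundary by elliptic regularity and the expansion converges smoothly.

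\textbf{Step 2: solving the radial ODE.} Each radial factor solves the Euler-type ODE coming from $\Delta(f Y_l)=0$:
\[
f(s)=a s^l+b s^{-(l+m-2)}.
\]
For $l=0$ the second solution is replaced by $s^{2-m}$ when $m>2$, and by $\log s$ when $m=2$. We now substitute into $sf'-\sigma f=0$.

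For $l\geqslant1$ the condition reads
\[
a(l-\sigma)s^{l}-b(l+m-2+\sigma)s^{-(l+m-2)}=0.
\]
Multiplying by $s^{l+m-2}$ gives $a(l-\sigma)s^{2l+m-2}=b(l+m-2+\sigma)$. The left side is a strictly monotone function of $s$ unless $a(l-\sigma)=0$, so it cannot take the same value at the two distinct radii $r<R$. Thus $a(l-\sigma)=0$ and $b(l+m-2+\sigma)=0$. For $\sigma>0$ the second relation forces $b=0$, and then a nonzero $f$ requires $\sigma=l$ and $f=s^l$.

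For $l=0$ and $m>2$ the condition becomes $\sigma a+(\sigma+m-2)b\,s^{2-m}=0$ at two radii. This forces $a=b=0$ when $\sigma>0$. For $l=0$ and $m=2$ it becomes $b=\sigma(a+b\log s)$ at two radii, which forces $b=0$ and then $a=0$.

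\textbf{Step 3: conclusion.} The positive weighted Steklov spectrum is $\{1,2,3,\dots\}$. The eigenspace of $\sigma=l$ consists of the homogeneous harmonic polynomials of degree $l$ restricted to the annulus. In particular $\sigma^+_1=1$, and its eigenspace is spanned by the linear functions $x_1,\dots,x_m$, which are exactly the coordinate functions of $\mathbb{A}^m(r,R)$. The remaining coordinates $x_{m+1},\dots,x_n$ vanish identically on the annulus and so contribute nothing. This proves the lemma, and also gives $\Ind_S(\mathbb{A}^m(r,R))=0$, which is the input needed for the Morse index computation in Theorem~\ref{thm:annulus}.
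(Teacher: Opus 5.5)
Your proof is correct, and it proves more than the lemma. It determines the entire positive spectrum $\{1,2,3,\dots\}$, with the $\sigma=l$ eigenspace equal to the degree-$l$ homogeneous harmonic polynomials, so the $\sigma_1^+$-eigenspace is exactly $\myspan\{x_1,\dots,x_m\}$.

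The paper uses the same separation of variables and the same Euler-type ODEs, but it takes a different route before solving them. It invokes Torn\'e's nodal domain theorem, after checking the sign of $\int_{\partial}\rho\,ds_g$ separately for $m=2$ and $m>2$. It combines this with the fact that only first-order spherical harmonics have two nodal domains, discards all modes with $l\geqslant 2$, and then solves only the $l=0$ and $l=1$ equations.

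Your route needs no nodal-domain input. Solving for every $l$ costs nothing extra. Your monotonicity argument for $s^{2l+m-2}$ (resp.\ $s^{2-m}$, $\log s$) is the precise reason the two-radius boundary conditions force each coefficient to vanish separately; the paper's appeal to ``linearly independent functions of $t$'' is vaguer.

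Your approach also avoids a soft spot in the paper's reduction: the passage from ``$f$ has two nodal domains'' to ``$f$ has no spherical-harmonic components of degree $l\geqslant 2$''. That step really requires your Step~1 observation that each component $f_{l,j}(s)Y_{l,j}(\omega)$ is itself a $\sigma_1^+$-eigenfunction. Once one has that observation, the nodal argument becomes superfluous.

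One correction to your closing remark, which lies outside the lemma. Definition~\ref{def:ind_spec} counts non-negative eigenvalues below $1$, so $\sigma_0=0$ is included and $\Ind_S(\mathbb{A}^m(r,R))=1$, not $0$. This is exactly the value used in Proposition~\ref{n-k_A}. With $\Ind_S=0$, the bound $\Ind\leqslant(n-m)\Ind_S$ would force $\Ind=0$, contradicting the $n-m$ negative directions $\partial_i$ exhibited in Proposition~\ref{k>2}. Indeed, for $m\geqslant 3$ the constant function alone gives $S_S(1,1)=-|\mathbb{S}^{m-1}|(R^{m-2}-r^{m-2})<0$.
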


\begin{proof}
The embedding of the annulus $\Phi : [r, R] \times \mathbb{S}^{m-1} \to \mathbb{A}^m(r,R)$ is given in spherical coordinates by:
\begin{align*}
\Phi_1(t, \theta_1, \ldots, \theta_{m-1}) &= t \cos \theta_1, \\
\Phi_2(t, \theta_1, \ldots, \theta_{m-1}) &= t \sin \theta_1 \cos \theta_2, \\
&\;\;\vdots \\
\Phi_{m-1}(t, \theta_1, \ldots, \theta_{m-1}) &= t \sin \theta_1 \sin \theta_2 \cdots \sin \theta_{m-2} \cos \theta_{m-1}, \\
\Phi_{m}(t, \theta_1, \ldots, \theta_{m-1}) &= t \sin \theta_1 \sin \theta_2 \cdots \sin \theta_{m-2} \sin \theta_{m-1},
\end{align*}
where $\theta_i \in [0, \pi]$ for $i = 1, \ldots, m - 2$ and $\theta_{m-1} \in [0, 2\pi)$. The Laplacian of the induced metric takes the form:
\[
\Delta_g f = \frac{\partial^2 f}{\partial t^2} + \frac{m - 1}{t} \frac{\partial f}{\partial t} + \frac{1}{t^2} \Delta_{\mathbb{S}^{m-1}} f.
\]
We aim to prove that the coordinate functions $\Phi_1, \dots, \Phi_m$ are eigenfunctions corresponding to the first positive Steklov eigenvalue $\sigma_1^+$.

Since the Laplace--Beltrami eigenfunctions $\{\phi_i\}_{i=0}^{\infty}$ of the standard sphere $\mathbb{S}^{m-1}$ (the spherical harmonics) form an $L^2(\mathbb{S}^{m-1})$-orthonormal basis, any function $f \in L^2(\mathbb{A}^m(r,R))$ can be decomposed as $f(t,x) = \displaystyle\sum_{i=0}^{\infty} a_i(t)\phi_i(x)$, where $t \in [r,R]$ and $x \in \mathbb{S}^{m-1}$. Suppose that $f$ is a $\sigma_1^+$-eigenfunction. The harmonicity condition $\Delta_g f = 0$ implies that each radial coefficient $a_i(t)$ satisfies the ODE:
\begin{equation}\label{eq:rR}
a_{i}^{\prime\prime}(t) + \frac{m - 1}{t}a_{i}^{\prime}(t) + \frac{\lambda_{i}}{t^2}a_{i}(t) = 0, \quad t \in [r,R],
\end{equation}
where $\lambda_i$ is the eigenvalue of $\phi_i$ for the operator $\Delta_{\mathbb{S}^{m-1}}$. The boundary conditions yield:
\begin{equation}\label{eq:brR}
\begin{cases}
    a_{i}^{\prime}(r) = \dfrac{\sigma_{1}^+}{r}a_{i}(r), \\[8pt]
    a_{i}^{\prime}(R) = \dfrac{\sigma_{1}^+}{R}a_{i}(R).
\end{cases}
\end{equation}

A direct computation shows that the weighted boundary integral $\displaystyle\int_{\partial\mathbb{A}^m(r,R)}\rho\,ds_g$, where $\rho$ is given by
\[
\rho = \begin{cases}
\dfrac{1}{R} & \text{on } \partial_{out}\mathbb{A}^m(r,R), \\[6pt]
-\dfrac{1}{r} & \text{on } \partial_{in}\mathbb{A}^m(r,R),
\end{cases}
\] 
vanishes when $m=2$ and is strictly positive when $m>2$. These conditions satisfy the hypotheses of \cite[Theorem 1.2, cases (2) and (3)]{torne2005steklov}, which guarantees that any eigenfunction corresponding to $\sigma_1^+$ possesses exactly two nodal domains. It is a well-known fact that the only spherical harmonics possessing exactly two nodal domains are those in the $\lambda_1$-eigenspace (see \cite{bateman1953higher}). Consequently, any eigenfunction $f$ with two nodal domains can only have non-zero projections onto the $\lambda_0$ and $\lambda_1$ eigenspaces. Note that the $\lambda_0$-eigenspace consists only of constant functions (with $\lambda_0 = 0$), and the multiplicity of the $\lambda_1$-eigenspace (with $\lambda_1 = -(m-1)$) is $m$.

For $m > 2$, the general solutions to \eqref{eq:rR} for these eigenspaces are:
\[
\begin{aligned}
a_{0}(t) &= C_1 + C_2 \frac{t^{2-m}}{2-m}, && \text{for } \lambda_{0} = 0, \\
a_{1, i}(t) &= C_1 t + C_2 t^{1-m}, && \text{for } \lambda_{1} = -(m-1).
\end{aligned}
\]
Differentiating with respect to $t$ and applying the boundary conditions \eqref{eq:brR} at $t \in \{r, R\}$, we obtain for the $\lambda_0$ case:
\begin{equation}\label{sol_1}
C_2 t^{1-m} = \sigma_1^+ \left( \frac{C_1}{t} + C_2 \frac{t^{1-m}}{2-m} \right).
\end{equation}
Multiplying by $t$ and rearranging yields:
\[
C_1 \sigma_1^+ + C_2 t^{2-m} \left( \frac{\sigma_1^+}{2-m} - 1 \right) = 0.
\]
For this to hold at both $t=r$ and $t=R$ (with $r < R$), the coefficients of the linearly independent functions of $t$ must vanish. Since $\sigma_1^+ > 0$, the first term implies $C_1 = 0$. The second term then implies $C_2 = 0$ (as $\dfrac{\sigma_1^+}{2-m} - 1 \neq 0$ for $m > 2$). Thus, $a_0(t) \equiv 0$.

For the $\lambda_1$ case, applying the boundary conditions gives:
\begin{equation}\label{sol_2}
C_1 + C_2 (1-m) t^{-m} = \sigma_1^+ \left( C_1 + C_2 t^{-m} \right).
\end{equation}
Rearranging this yields:
\[
C_1 (1 - \sigma_1^+) + C_2 t^{-m} (1 - m - \sigma_1^+) = 0.
\]
Again, for this to hold at both $t=r$ and $t=R$, we must have $C_1 (1 - \sigma_1^+) = 0$ and $C_2 (1 - m - \sigma_1^+) = 0$. Since we seek a non-trivial solution and $\sigma_1^+ > 0$, the second equation forces $C_2 = 0$ (because $1 - m - \sigma_1^+ < 0$). The first equation then requires $\sigma_1^+ = 1$ (since $C_1 \neq 0$). 

Thus, the only non-trivial solutions are of the form $f(t, x) = C_1 t \varphi(x)$, where $\varphi$ is an eigenfunction in the $\lambda_1$-eigenspace of $\mathbb{S}^{m-1}$. Since the coordinate functions $\Phi_1, \dots, \Phi_m$ are precisely of this form, they are indeed $\sigma_1^+$-eigenfunctions.

For $m = 2$, the solutions to \eqref{eq:rR} take a slightly different form:
\[
\begin{aligned}
a_{0}(t) &= C_1 + C_2 \log t, && \text{for } \lambda_{0} = 0, \\
a_{1, i}(t) &= C_1 t + C_2 t^{-1}, && \text{for } \lambda_{1} = -1.
\end{aligned}
\]
Applying the boundary conditions to $a_0(t)$ yields $C_2 = \sigma_1^+ C_1 + \sigma_1^+ C_2 \log t$. For this to hold at $t=r$ and $t=R$, we must have $\sigma_1^+ C_2 = 0$, which implies $C_2 = 0$ and consequently $C_1 = 0$. Thus, $a_0(t) \equiv 0$.
For $a_{1, i}(t)$, the boundary conditions give:
\[
C_1 - C_2 t^{-2} = \sigma_1^+ \left( C_1 + C_2 t^{-2} \right) \implies C_1 (1 - \sigma_1^+) - C_2 t^{-2} (1 + \sigma_1^+) = 0.
\]
As before, this requires $C_1 (1 - \sigma_1^+) = 0$ and $C_2 (1 + \sigma_1^+) = 0$. Since $\sigma_1^+ > 0$, we must have $C_2 = 0$ and $\sigma_1^+ = 1$. 

Therefore, in all dimensions $m \geqslant 2$, the coordinate functions $\Phi_1, \dots, \Phi_m$ are $\sigma_1^+$-eigenfunctions, which completes the proof.
\end{proof}

\begin{proposition}\label{n-k_A}
The Morse index of an $m$-dimensional flat annulus $\mathbb{A}^m(r,R)$, considered as a free boundary minimal submanifold in an $n$-dimensional spherical shell $\mathbb{A}^n(r,R)$, is at most $n-m$.\end{proposition}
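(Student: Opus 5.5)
The plan is to work directly with the second variation formula \eqref{index_S}. The annulus $\Sigma=\mathbb{A}^m(r,R)$ lies in a linear subspace $\mathbb{E}^m\subset\mathbb{E}^n$, so it is totally geodesic and $\mathcal{B}\equiv 0$. Its normal bundle is trivial, spanned by the constant orthonormal vectors $e_{m+1},\dots,e_n$, and $\nabla^\perp$ is the flat connection. Writing a normal field as $V=\sum_{j=m+1}^n v_j e_j$, the index form splits as a direct sum
\[
S(V,V)=\sum_{j=m+1}^{n} Q(v_j,v_j),\qquad Q(v,v)=\int_\Sigma |\nabla^g v|^2\,dv_g-\frac{1}{R}\int_{\partial_{\mathrm{out}}}v^2\,ds_g+\frac{1}{r}\int_{\partial_{\mathrm{in}}}v^2\,ds_g .
\]
Hence $\Ind(\Sigma)=(n-m)\,\Ind(Q)$, where $\Ind(Q)$ is the index of the scalar form $Q$ on $W^{1,2}(\Sigma)$. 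It therefore suffices to show $\Ind(Q)\leqslant 1$.

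To bound $\Ind(Q)$, I would first replace any $v$ by its harmonic extension $\widehat v$, which can only lower $Q$ by the Dirichlet principle (as in the proof of Proposition~\ref{prop:spec_ind}). The index of $Q$ is then the number of weighted Steklov eigenvalues $\sigma$ with $\sigma<1$ for which $Q$ is negative, counted on the whole space and not only on $X$. Since $Q(u,u)=(\sigma-1)\int_{\partial\Sigma}u^2\rho$ for an eigenfunction, the directions where $Q<0$ are the following.
\begin{enumerate}[(a)]
\item Eigenfunctions with $0\leqslant\sigma<1$ and positive weighted norm. This includes the constant eigenfunction $\sigma_0=0$, for which $Q(1,1)=-\int_{\partial\Sigma}\rho$. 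This is $<0$ when $m>2$ and $=0$ when $m=2$, as computed in the proof of Lemma~\ref{sigma_1}.
\item The positive eigenvalues $\sigma_j^+<1$.
\item Eigenvalues $\sigma^-_j<0$, for which $\int u^2\rho<0$ and $\sigma-1<0$, so $Q>0$; these directions contribute nothing.
\end{enumerate}
A clean way to organize the count is a min-max argument: $Q$ is positive on $X^-$ and on the $\rho$-orthogonal complement of the eigenspaces with $\sigma<1$. So $\Ind(Q)\leqslant \dim\mathrm{span}\{1\}+\#\{j:\sigma_j^+<1\}$.

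The key input is Lemma~\ref{sigma_1}. Its proof shows that every $\sigma_1^+$-eigenfunction is of the form $C_1t\varphi$ and that $\sigma_1^+=1$. Hence there are no positive eigenvalues below $1$, i.e.\ $\Ind_S(\Sigma)=0$. Combining this with the count above gives $\Ind(Q)\leqslant 1$: the only possible negative direction is the constant function, i.e.\ the translation $v\equiv$ const in a normal direction. Therefore $\Ind(\Sigma)\leqslant n-m$.

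The main obstacle is making the min-max count rigorous for the indefinite weight. Because $X$ is defined by $\rho$-orthogonality to constants, I need to check that $W^{1,2}$ decomposes as $\mathrm{span}\{1\}\oplus X$. This holds when $\int\rho\neq 0$. In the case $m=2$, $\int\rho=0$, so the constant lies in $X$ itself, and one must argue separately that $Q\geqslant 0$ there. In that case the constant gives $Q=0$, so the bound only improves. Otherwise I will verify that $Q$ restricted to $X$ is nonnegative using $\sigma_1^+=1$ and the expansion in eigenfunctions, which is the step requiring care.
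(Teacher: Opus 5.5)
Your proposal is correct and is essentially the paper's proof: split the normal field into its $n-m$ scalar components, pass to harmonic extensions by the Dirichlet principle, and use $\sigma_1^+=1$ from Lemma~\ref{sigma_1} so that each component contributes at most one negative direction (your ``constant plus no positive eigenvalues below $1$'' is the paper's $\Ind_S=1$, since Definition~\ref{def:ind_spec} already counts $\sigma_0=0$, so your $\Ind_S(\Sigma)=0$ is only a change of convention). The step you flag follows directly from \eqref{var+} with $j=1$, which gives $\int_{\partial\Sigma}\rho u^2\,ds_g\leqslant\int_\Sigma|\nabla^g u|^2\,dv_g$ on $X^+$ (trivially also on $X\setminus X^+$), so $Q\geqslant 0$ on the hyperplane $X=\{u:\int_{\partial\Sigma}\rho u\,ds_g=0\}$ and $\Ind(Q)\leqslant 1$ for every $m\geqslant 2$; drop the remark that the bound improves when $m=2$, because there the eigenfunction count misses the negative direction (e.g.\ $Q(c+\log(t/r))<0$ for $c>\tfrac12(1-\log\frac{R}{r})$), so $\Ind(Q)=1$ in that case too.
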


\begin{proof}
Notice that the normal space to $\mathbb{A}^m(r,R)$ at any point $p$ is given by $N_p\mathbb{A}^m(r,R) = \text{span}\{(\partial_1)_{|p}, \dots, (\partial_{n-m})_{|p}\}$. Consequently, any normal vector field $X$ along $\mathbb{A}^m(r,R)$ can be expressed as $X =\displaystyle \sum_{i=1}^{n-m} X^i \partial_i$ for some smooth functions $X^i$.

Let $\{e_1, \dots, e_m\}$ be a local orthonormal frame for $T\mathbb{A}^m(r,R)$. Since the ambient space is Euclidean, the coordinate vector fields $\partial_i$ are parallel, meaning $\nabla_{e_j} \partial_i = 0$. Therefore, the normal connection acts as:
\[
(\nabla_{e_j} X)^\perp = \left( \sum_{i=1}^{n-m} e_j(X^i) \partial_i + X^i \nabla_{e_j} \partial_i \right)^\perp = \sum_{i=1}^{n-m} e_j(X^i) \partial_i.
\]
Consequently, the squared norm of the normal connection is
\[
|\nabla^\perp X|^2 = \sum_{j=1}^m |(\nabla_{e_j} X)^\perp|^2 = \sum_{j=1}^m \sum_{i=1}^{n-m} (e_j(X^i))^2 = \sum_{i=1}^{n-m} |\nabla^g X^i|_g^2.
\]

Since the annulus is totally geodesic, the Simons operator vanishes ($\mathcal{B} \equiv 0$). Thus, the formula for the second variation of the volume functional~\eqref{index_S} yields:
\begin{equation*}
    S(X, X) = \sum_{i=1}^{n-m} \left( \int_{\mathbb{A}^m(r,R)} |\nabla^g X^i|_g^2 \, dv_g - \frac{1}{R}\int_{\partial_{out} \mathbb{A}^m(r,R)} (X^i)^2 \, ds_g + \frac{1}{r}\int_{\partial_{in} \mathbb{A}^m(r,R)} (X^i)^2 \, ds_g \right).
\end{equation*}
By the minimization property of the Dirichlet energy, we obtain:
\[
S(X, X) \geqslant \sum_{i=1}^{n-m} S_S(X^i, X^i).
\]
Consequently, we obtain the bound:
\[
\Ind(\mathbb{A}^m(r,R)) \leqslant (n-m) \Ind_S(\mathbb{A}^m(r,R)).
\]
By Lemma~\ref{sigma_1}, we know that the first positive Steklov eigenvalue is $\sigma_1^+(\mathbb{A}^m(r,R)) = 1$. Since $\sigma_0 = 0$ is the only eigenvalue strictly less than $1$ (and it has multiplicity $1$), we have $\Ind_S(\mathbb{A}^m(r,R)) = 1$. 

Substituting this into our bound yields $\Ind(\mathbb{A}^m(r,R)) \leqslant n-m$, which gives the desired result.
\end{proof}

\begin{proposition}\label{k>2}
Let $m \geqslant 3$. The Morse index of an $m$-dimensional flat annulus $\mathbb{A}^m(r,R)$, considered as a free boundary minimal submanifold in an $n$-dimensional spherical shell $\mathbb{A}^n(r,R)$, is exactly $n-m$.\end{proposition}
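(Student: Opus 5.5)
Since Proposition~\ref{n-k_A} already gives $\Ind(\mathbb{A}^m(r,R))\leqslant n-m$, it remains to prove the lower bound $\Ind(\mathbb{A}^m(r,R))\geqslant n-m$. To do this we exhibit an $(n-m)$-dimensional space of normal variations on which the index form $S$ is negative definite. We place $\mathbb{A}^m(r,R)$ in the coordinate subspace spanned by the last $m$ coordinates, so that its normal bundle is trivialized by the constant fields $\partial_1,\dots,\partial_{n-m}$, as in the proof of Proposition~\ref{n-k_A}. For a normal field $X=\sum_i X^i\partial_i$ the index form then splits as $S(X,X)=\sum_{i=1}^{n-m}Q(X^i,X^i)$, where
\[
Q(u,u)=\int_{\mathbb{A}^m(r,R)}|\nabla^g u|^2\,dv_g-\frac1R\int_{\partial_{out}\mathbb{A}^m(r,R)}u^2\,ds_g+\frac1r\int_{\partial_{in}\mathbb{A}^m(r,R)}u^2\,ds_g .
\]
Hence it suffices to find a single function $u$ with $Q(u,u)<0$. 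The fields $u\,\partial_i$, $i=1,\dots,n-m$, then span an $(n-m)$-dimensional space, and on it $S(X,X)=Q(u,u)\sum_i c_i^2<0$ for $X=u\sum_i c_i\partial_i\neq 0$.

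The natural candidate is the constant function $u\equiv 1$, which is the $\sigma_0=0$ eigenfunction responsible for $\Ind_S=1$. Geometrically, it corresponds to translating the annulus in a normal direction. For it we get
\[
Q(1,1)=-\frac{|\mathbb{S}^{m-1}(R)|}{R}+\frac{|\mathbb{S}^{m-1}(r)|}{r}=|\mathbb{S}^{m-1}|\bigl(r^{m-2}-R^{m-2}\bigr)=-\int_{\partial\mathbb{A}^m(r,R)}\rho\,ds_g .
\]
This is strictly negative precisely when $m\geqslant 3$ and $r<R$, which is the positivity of $\int\rho$ already noted in the proof of Lemma~\ref{sigma_1}. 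This is where the hypothesis $m\geqslant 3$ enters. When $m=2$ the constant is a null direction rather than a negative one, and a separate argument would be needed.

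Combining the two bounds gives $\Ind=n-m$. The step needing the most care is checking that the normal index form on this totally geodesic piece really is the decoupled sum above with the sign conventions of \eqref{index_S}: the convex-side boundary $\partial_{out}$ contributes $-1/R$, and the inner boundary, met from the concave side, contributes $+1/r$. One must also check that the constant normal fields $\partial_i$ are admissible variations, meaning they are tangent to both boundary spheres along $\partial\mathbb{A}^m(r,R)$. This holds because $\partial_i$ is orthogonal to the position vector there. With those checks in place, the computation of $Q(1,1)$ completes the argument.
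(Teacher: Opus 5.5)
Your proposal is correct and is essentially the paper's proof. The paper also uses the constant normal fields $\partial_1,\dots,\partial_{n-m}$, computes $S(\partial_i,\partial_i)=|\mathbb{S}^{m-1}|\,(r^{m-2}-R^{m-2})<0$ for $m\geqslant 3$, and combines this with the upper bound of Proposition~\ref{n-k_A}. Your decoupling $S(X,X)=\sum_i Q(X^i,X^i)$ and your boundary-tangency check simply make explicit two points the paper leaves implicit: the vanishing of the cross terms $S(\partial_i,\partial_j)$ for $i\neq j$, and the admissibility of the $\partial_i$.
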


\begin{proof}
Applying the second variation formula for the volume functional \eqref{index_S} to the flat annulus $\mathbb{A}^m(r,R)$ yields
\[
S(V,V) = \int_{\mathbb{A}^m(r,R)} |\nabla^\perp V|^2 \, dv_g - \frac{1}{R} \int_{\partial_{out} \mathbb{A}^m(r,R)} |V|^2 \, ds_g + \frac{1}{r} \int_{\partial_{in} \mathbb{A}^m(r,R)} |V|^2 \, ds_g.
\]
Without loss of generality, we may assume the annulus lies in the coordinate subspace $x_1 = \dots = x_{n-m} = 0$ of the ambient Euclidean space. Consequently, the constant coordinate vector fields $\partial_1, \dots, \partial_{n-m}$ form a global orthonormal frame for the normal bundle of $\mathbb{A}^m$. 

Evaluating the index form on these constant normal vector fields $V = \partial_i$ for $i = 1, \dots, n-m$, we observe that $\nabla^\perp \partial_i = 0$ and $|\partial_i|^2 = 1$. Thus, we obtain
\[
S(\partial_i, \partial_i) = -\frac{1}{R} \text{Vol}(\partial_{out} \mathbb{A}^m) + \frac{1}{r} \text{Vol}(\partial_{in} \mathbb{A}^m).
\]
Since the boundary components are spheres of radius $R$ and $r$ respectively, their $(m-1)$-dimensional volumes are $|\mathbb{S}^{m-1}| R^{m-1}$ and $|\mathbb{S}^{m-1}| r^{m-1}$, where $|\mathbb{S}^{m-1}|$ denotes the volume of the unit $(m-1)$-sphere. Substituting these in, we get
\begin{align}\label{S<0}
S(\partial_i, \partial_i) = -\frac{1}{R} |\mathbb{S}^{m-1}| R^{m-1} + \frac{1}{r} |\mathbb{S}^{m-1}| r^{m-1} = |\mathbb{S}^{m-1}| \left( r^{m-2} - R^{m-2} \right).
\end{align}
For $m > 2$ and $r < R$, we have $r^{m-2} < R^{m-2}$, which implies $S(\partial_i, \partial_i) < 0$ for each $i = 1, \dots, n-m$. 

Since the vector fields $\partial_1, \dots, \partial_{n-m}$ are linearly independent, the index form is negative definite on an $(n-m)$-dimensional subspace of normal variations. Hence, the Morse index satisfies $\Ind(\mathbb{A}^m) \geqslant n-m$. Combined with the upper bound established in Proposition~\ref{n-k_A}, we conclude that $\Ind(\mathbb{A}^m) = n-m$, as desired.
\end{proof}

Finally, we consider the case where $m=2$.

\begin{proposition}\label{k=2}
 The Morse index of a $2$-dimensional flat annulus $\mathbb{A}^2(r,R)$, considered as a free boundary minimal submanifold in a $3$-dimensional spherical shell $\mathbb{A}^3(r,R)$, is exactly $1$.
 \end{proposition}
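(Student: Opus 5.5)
The plan is to combine the upper bound already available with a lower bound from an explicit test field. By Proposition~\ref{n-k_A} with $m=2$ and $n=3$, we already have $\Ind(\mathbb{A}^2(r,R))\leqslant 1$. So it only remains to exhibit one normal variation on which the index form is negative.

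The argument of Proposition~\ref{k>2} does not apply directly. By \eqref{S<0} with $m=2$, the constant normal field $\partial_1$ gives $S(\partial_1,\partial_1)=2\pi(r^0-R^0)=0$. The constant field is therefore a null direction, and the plan is to perturb it by a radial harmonic function.

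Put the annulus in $\{x_1=0\}$ and take $V=u\,\partial_1$ with $u=u(t)$ radial, $t\in[r,R]$. Since $\partial_1$ is parallel, $|\nabla^\perp V|^2=|\nabla^g u|^2$ and $\mathcal B\equiv 0$. Hence
\[
S(V,V)=\int_{\mathbb{A}^2}|\nabla^g u|^2\,dv_g-\frac1R\int_{|x|=R}u^2\,ds_g+\frac1r\int_{|x|=r}u^2\,ds_g .
\]
For the test function I would take $u=a+b\log t$, which is harmonic in the plane.

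We may normalize $r=1$ and write $L=\log R>0$. This normalization is harmless: in dimension two the Dirichlet energy is scale invariant, and each boundary term $\rho^{-1}\int u^2\,ds$ is also invariant under dilations. The three terms then evaluate to
\[
\frac{S(V,V)}{2\pi}=b^2L-(a+bL)^2+a^2=bL\,(b-2a-bL).
\]
Choosing $b=1$ and $a>(1-L)/2$ makes this quantity negative. With this choice $V$ is a nonzero smooth normal field with $S(V,V)<0$, and hence $\Ind(\mathbb{A}^2(r,R))\geqslant 1$.

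Combined with the upper bound from Proposition~\ref{n-k_A}, this gives $\Ind=1$. The only real obstacle was the degeneracy of the constant field when $m=2$, and the logarithmic radial perturbation above resolves it.
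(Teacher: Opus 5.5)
Your proposal is correct and follows the same route as the paper: the upper bound comes from Proposition~\ref{n-k_A}, and the lower bound comes from a logarithmic radial multiple of the unit normal used as a test field. Your free additive constant $a$ is what makes the lower bound hold for every $r<R$. The paper takes $\varphi=\ln(t/r)$, i.e.\ $a=0,\ b=1$, which gives $S(\varphi,\varphi)=2\pi L(1-L)$ with $L=\ln(R/r)$; this is negative only when $R>e\,r$. Your choice, for example $a=\tfrac12$, gives $S=-2\pi L^2<0$ for all $L>0$.
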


\begin{proof}
By Proposition~\ref{n-k_A}, we know that the index is at most 1, so we only need to construct one normal vector field for which the second variation~\eqref{index_S} is negative. Since the normal bundle is one-dimensional, $S$ takes the form
\[
S(\varphi,\varphi) = \int_{\mathbb A^2(r,R)} |\nabla^g\varphi|^2 \, dv_g 
- \frac{1}{R}\int_{\partial_{out} \mathbb A^2(r,R)} \varphi^2 \, ds_g 
+ \frac{1}{r}\int_{\partial_{in} \mathbb A^2(r,R)} \varphi^2 \, ds_g.
\]

We consider the function 
\[
\varphi(t,\theta) = \ln\frac{t}{r}
\]
in polar coordinates $(t,\theta)$. One has
\[
|\nabla^g\varphi|^2 = \left(\frac{\partial\varphi}{\partial t}\right)^2 + \frac{1}{t^2}\left(\frac{\partial\varphi}{\partial\theta}\right)^2= \frac{1}{t^2}.
\]

 Therefore
\[
\int_{\mathbb A^2(r,R)} |\nabla^g\varphi|^2 \, dv_g 
= \int_{r}^{R}\int_{0}^{2\pi} \frac{1}{t^2}\,t\,d\theta\,dt
= 2\pi \int_{r}^{R} \frac{dt}{t}
= 2\pi \ln\frac{R}{r}.
\]

On the outer boundary \(t = R\), \(\varphi(R) = \ln(R/r)\).  Thus
\[
\frac{1}{R}\int_{\partial_{out} \mathbb A^2(r,R)} \varphi^2 \, ds
= \frac{1}{R} \int_{0}^{2\pi} \left(\ln\frac{R}{r}\right)^2 R\,d\theta
= 2\pi \left(\ln\frac{R}{r}\right)^2.
\]

On the inner boundary \(t = r\), \(\varphi(r) = \ln(r/r) = 0\). Hence
\[
\frac{1}{r}\int_{\partial_{in} \mathbb A^2(r,R)} \varphi^2 \, ds
 = 0.
\]

Substituting into \(S\):
\[
S(\varphi,\varphi) = 2\pi \ln\frac{R}{r} - 2\pi \left(\ln\frac{R}{r}\right)^2
= 2\pi \ln\frac{R}{r}\left(1 - \ln\frac{R}{r}\right)<0,
\]
since $R>r$.
\end{proof}

Propositions \ref{k>2} and \ref{k=2} imply Theorem~\ref{thm:annulus}.

\begin{remark}
    As it follows from \eqref{S<0}, when $m=2$ one has $S(\partial_1,\partial_1)=0$ for the annulus lying in the plane $x_1=0$, i.e., the \textit{nullity} of a flat annulus $\mathbb A^2(r,R)$ in a spherical shell $\mathbb A^2(r,R)$ is at least one.
\end{remark}

\section{Appendix}\label{appendix}

\subsection{On the Index of the Unit Ball}

It is well known that the Morse index of the unit ball $\mathbb{B}^m$, considered as a free boundary minimal submanifold in $\mathbb{B}^n$, is exactly $n-m$ (see, for example,~\cite[Chapter 1, Section 1.5.3]{fraser2020geometric} for the case $m=n-1$). In this section, we present a detailed proof of this fact utilizing the spectral index approach (see~\cite{medvedev2023index} for the definition of the spectral index in this context).  

\begin{lemma}\label{sigma_ball}
The coordinate functions of the $m$-dimensional ball $\mathbb{B}^m \subset \mathbb{B}^n \subset \mathbb{E}^n$ are eigenfunctions corresponding to the first positive Steklov eigenvalue $\sigma_1(\mathbb{B}^m)$, i.e., they satisfy the Steklov problem:
\[
\begin{cases}
    \Delta_g u = 0 & \text{in } \mathbb{B}^m, \\[6pt]
    \dfrac{\partial u}{\partial \eta} = \sigma_1(\mathbb{B}^m) u & \text{on } \partial \mathbb{B}^m,
\end{cases}
\]
where $\sigma_1(\mathbb{B}^m)$ denotes the first positive Steklov eigenvalue.
\end{lemma}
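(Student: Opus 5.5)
The plan is to argue directly by separation of variables on $\mathbb{B}^m$, imitating the proof of Lemma~\ref{sigma_1} but without nodal-domain results. First, the coordinate functions $x_1,\dots,x_m$ of $\mathbb{B}^m$ solve the Steklov problem with eigenvalue $1$. Indeed, they are harmonic, and on $\partial\mathbb{B}^m=\mathbb{S}^{m-1}$ the outward conormal is $\eta=x$, so $\partial x_i/\partial\eta = x_i$. It therefore remains to show that $1$ is the smallest positive Steklov eigenvalue of $\mathbb{B}^m$.

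To do this, write the flat Laplacian in polar coordinates $(t,x)\in(0,1]\times\mathbb{S}^{m-1}$ as in the proof of Lemma~\ref{sigma_1}. Expand a Steklov eigenfunction $u$ as $u=\sum_i a_i(t)\phi_i(x)$, where $\{\phi_i\}$ is an orthonormal basis of spherical harmonics. Each coefficient then satisfies \eqref{eq:rR}. If $\phi_i$ has degree $\ell$, so that $-\lambda_i=\ell(\ell+m-2)$, the solutions are $t^{\ell}$ and $t^{2-m-\ell}$ (or $\log t$ when $m=2$, $\ell=0$).

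The only delicate point is excluding the singular solution. Smoothness of $u$ at the origin, or simply finiteness of the Dirichlet energy $\int_{\mathbb{B}^m}|\nabla u|^2$, forces $a_i(t)=c_i t^{\ell}$. To justify this cleanly, I would note that $a_i(t)=\int_{\mathbb{S}^{m-1}}u(t,x)\phi_i(x)\,dx$ is bounded as $t\to 0$ because $u$ is continuous. This immediately rules out $t^{2-m-\ell}$ and $\log t$.

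The boundary condition at $t=1$ now reads $a_i'(1)=\sigma a_i(1)$, which gives $\ell c_i=\sigma c_i$. Hence the Steklov spectrum of $\mathbb{B}^m$ is $\{\ell:\ell=0,1,2,\dots\}$, and the eigenspace for $\ell$ consists of the homogeneous harmonic polynomials of degree $\ell$. The smallest positive eigenvalue is therefore $\sigma_1(\mathbb{B}^m)=1$. Its eigenspace is spanned by $t\phi(x)$ with $\phi$ a first spherical harmonic, that is, by the linear functions $x_1,\dots,x_m$. This gives the claim, and also shows that the eigenvalue $1$ has multiplicity exactly $m$.
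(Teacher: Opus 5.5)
Your proof is correct, and it excludes the higher modes differently from the paper. Mirroring Lemma~\ref{sigma_1}, the paper first invokes a Courant-type nodal theorem: a $\sigma_1$-eigenfunction has exactly two nodal domains. It combines this with the assertion that only degree-one spherical harmonics have two nodal domains, and so discards a priori every component except the $\lambda_0$ and $\lambda_1$ ones. It then solves only those two radial equations, removes $t^{2-m}$ (or $\log t$) and $t^{1-m}$ by smoothness at the origin, and reads off $\sigma_1=1$.

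You instead solve the Euler equation in every degree $\ell$ and discard the singular root by boundedness of $a_i$ as $t\to 0$. The boundary condition $\ell c_i=\sigma c_i$ then does the selection mode by mode. This costs essentially nothing extra and yields more: the whole spectrum $\{0,1,2,\dots\}$ with its eigenspaces. In particular you get multiplicity $m$ for $\sigma_1$ and simplicity of $\sigma_0=0$, which is exactly what Lemma~\ref{n-k} uses to obtain $\Ind_S(\mathbb{B}^m)=1$.

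It is also the more robust route. The paper's reduction does not follow as written, for two reasons. The nodal count of $u$ does not by itself constrain its individual harmonic components. And the quoted fact about two nodal domains fails in general: by a classical theorem of Lewy, $\mathbb{S}^2$ carries spherical harmonics of every odd degree with exactly two nodal domains. Your mode-by-mode boundary analysis makes the nodal input unnecessary. The same computation, with the two conditions at $t=r$ and $t=R$, would likewise prove Lemma~\ref{sigma_1} without appealing to nodal domains.
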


\begin{proof}
The coordinate embedding of the ball $\Phi : [0, 1] \times \mathbb{S}^{m-1} \to \mathbb{B}^m$ is given in spherical coordinates by:
\begin{align*}
\Phi_1(t, \theta_1, \ldots, \theta_{m-1}) &= t \cos \theta_1, \\
\Phi_2(t, \theta_1, \ldots, \theta_{m-1}) &= t \sin \theta_1 \cos \theta_2, \\
&\;\;\vdots \\
\Phi_{m-1}(t, \theta_1, \ldots, \theta_{m-1}) &= t \sin \theta_1 \sin \theta_2 \cdots \sin \theta_{m-2} \cos \theta_{m-1}, \\
\Phi_{m}(t, \theta_1, \ldots, \theta_{m-1}) &= t \sin \theta_1 \sin \theta_2 \cdots \sin \theta_{m-2} \sin \theta_{m-1},
\end{align*}
where $\theta_i \in [0, \pi]$ for $i = 1, \ldots, m - 2$ and $\theta_{m-1} \in [0, 2\pi)$. The Laplace--Beltrami operator of the induced metric takes the form:
\[
\Delta_g f = \frac{\partial^2 f}{\partial t^2} + \frac{m - 1}{t} \frac{\partial f}{\partial t} + \frac{1}{t^2} \Delta_{\mathbb{S}^{m-1}} f.
\]

Since the Laplace--Beltrami eigenfunctions $\{\phi_i\}_{i=0}^{\infty}$ of the standard sphere $\mathbb{S}^{m-1}$ form an $L^2(\mathbb{S}^{m-1})$-orthonormal basis, any function $f \in L^2(\mathbb{B}^m)$ can be decomposed as $f(t,x) = \displaystyle\sum_{i=0}^{\infty} a_i(t)\phi_i(x)$, where $t \in [0,1]$ and $x \in \mathbb{S}^{m-1}$. 

Suppose that $f$ is an eigenfunction corresponding to the first Steklov eigenvalue $\sigma_1(\mathbb{B}^m)$. Then $f$ satisfies the boundary value problem:
\begin{equation}
\begin{cases} 
\Delta_{g}f = 0 & \text{in } \mathbb{B}^{m}, \\[6pt]
\dfrac{\partial f}{\partial \eta} = \sigma_{1}(\mathbb{B}^{m})f & \text{on } \partial \mathbb{B}^{m},
\end{cases}
\end{equation}
where $\eta$ is the outward unit normal. Since $\dfrac{\partial}{\partial \eta} = \dfrac{\partial}{\partial t}$ on the boundary $t=1$, substituting the series expansion into the harmonicity condition $\Delta_g f = 0$ yields the following ordinary differential equation for each radial coefficient $a_i(t)$:
\begin{equation}
\begin{cases} 
a_{i}^{\prime\prime}(t) + \dfrac{m-1}{t}a_{i}^{\prime}(t) + \dfrac{\lambda_{i}}{t^2}a_{i}(t) = 0, & t \in (0,1], \\[8pt]
a_{i}^{\prime}(1) = \sigma_{1}(\mathbb{B}^{m})a_{i}(1),
\end{cases}
\end{equation}
where $\lambda_i$ is the eigenvalue of $\phi_i$ for the operator $\Delta_{\mathbb{S}^{m-1}}$.

By the Courant nodal domain theorem, any eigenfunction corresponding to the first positive Steklov eigenvalue $\sigma_1(\mathbb{B}^m)$ has exactly two nodal domains. It is a well-known fact that the only spherical harmonics possessing exactly two nodal domains are those in the $\lambda_1$-eigenspace (see \cite{bateman1953higher}). Consequently, any such eigenfunction can only have non-zero projections onto the $\lambda_0$ and $\lambda_1$ eigenspaces. Recall that the $\lambda_0$-eigenspace consists solely of constant functions, while the $\lambda_1$-eigenspace (with eigenvalue $\lambda_1 = -(m-1)$ under our sign convention) has multiplicity $m$.

First, consider the radial equation for $\lambda_0 = 0$:
\begin{equation*}
a_{0}^{\prime\prime}(t) + \frac{m-1}{t}a_{0}^{\prime}(t) = 0, \quad t \in (0,1].
\end{equation*}
The general solution is given by:
\[
\begin{aligned}
a_0(t) &= C_1 + C_2\frac{t^{2-m}}{2-m}, && \text{for } m > 2, \\
a_0(t) &= C_1 + C_2 \log(t), && \text{for } m = 2.
\end{aligned}
\]
For the solution to be smooth at the origin $t=0$, we must have $C_2 = 0$. This leaves $a_0(t) = C_1$, which corresponds to a constant function. However, a constant function has only one nodal domain, which contradicts the requirement that a $\sigma_1$-eigenfunction must have exactly two. Thus, the $\lambda_0$ mode does not yield the first positive eigenfunction.

Next, consider the case $\lambda_1 = -(m-1)$, which yields the equation:
\begin{equation*}
a_{1i}^{\prime\prime}(t) + \frac{m-1}{t}a_{1i}^{\prime}(t) - \frac{m-1}{t^2}a_{1i}(t) = 0.
\end{equation*}
We observe that $a_{1i}(t) = t$ is a solution. To find the general solution, we use the substitution $a_{1i}(t) = t g(t)$, which reduces the equation to:
\begin{equation*}
t g^{\prime\prime}(t) + \frac{m+1}{t}g^{\prime}(t) = 0. 
\end{equation*}
Solving this yields $g(t) = C_1 + C_2 \dfrac{t^{-m}}{m}$. Again, the requirement of smoothness at $t=0$ forces $C_2 = 0$, leaving only the constant solution $g(t) = C_1$.

Therefore, the only smooth, non-trivial solutions are of the form $a_{1i}(t) = C_1 t$. This implies that the eigenfunctions are precisely of the form $f(t, x) = t \varphi_l(x)$, where $\varphi_l$ belongs to the $\lambda_1$-eigenspace of $\mathbb{S}^{m-1}$. Recognizing that the coordinate functions $\Phi_i$ of the ball are exactly of this form (with $C_1=1$ and $\varphi_l$ being the standard basis of the $\lambda_1$-eigenspace), we conclude that the $\Phi_i$ are indeed $\sigma_1$-eigenfunctions, and the corresponding eigenvalue is $\sigma_1(\mathbb{B}^m) = 1$.
\end{proof}

\begin{lemma}
\label{n-k}
The index of a unit ball of $m$ dimensions in a unit ball of $n$ dimensions in $\mathbb{E}^n$ is $n-m$.
\end{lemma}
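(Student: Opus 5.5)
The proof mirrors the flat-annulus case of Subsection~\ref{flat}: an upper bound from the spectral index, and a lower bound from explicit test fields. Place $\mathbb{B}^m$ in the coordinate subspace $x_1=\dots=x_{n-m}=0$. Then the constant fields $\partial_1,\dots,\partial_{n-m}$ form a global parallel orthonormal frame of the normal bundle. Since $\mathbb{B}^m$ is totally geodesic, the Simons term vanishes. Writing $X=\sum_{i=1}^{n-m}X^i\partial_i$, the computation in the proof of Proposition~\ref{n-k_A} gives
\[
S(X,X)=\sum_{i=1}^{n-m}\left(\int_{\mathbb{B}^m}|\nabla^g X^i|^2\,dv_g-\int_{\partial\mathbb{B}^m}(X^i)^2\,ds_g\right),
\]
which is the index form of the free boundary problem in $\mathbb{B}^n$ with weight $\rho\equiv1$.

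For the upper bound, the Dirichlet principle gives
\[
S(X,X)\geqslant\sum_i S_S(X^i,X^i),
\]
where $S_S(u,u)=\int|\nabla\widehat u|^2-\int_{\partial\mathbb{B}^m}u^2$. As in Proposition~\ref{prop:spec_ind}, this yields $\Ind(\mathbb{B}^m)\leqslant(n-m)\Ind_S(\mathbb{B}^m)$. By Lemma~\ref{sigma_ball}, the first positive Steklov eigenvalue of $\mathbb{B}^m$ equals $1$. Hence the only Steklov eigenvalue in $[0,1)$ is $\sigma_0=0$, which is simple with constant eigenfunctions. So $\Ind_S(\mathbb{B}^m)=1$ and $\Ind(\mathbb{B}^m)\leqslant n-m$.

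One point needs care here. In the unweighted setting the space $X$ used to define $\Ind_S$ is not restricted to functions orthogonal to constants. So I count the spectral index as the number of eigenvalues in $[0,1)$ (including $\sigma_0$), in line with the convention of \cite{medvedev2023index}. This makes the dimension count in Proposition~\ref{prop:spec_ind} go through: a field with components $L^2(\partial\mathbb{B}^m)$-orthogonal to constants has $S_S(X^i,X^i)\geqslant0$, because the Steklov spectrum on the complement of constants starts at $1$.

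For the lower bound, test on the constant normal fields $V=\partial_i$, $i=1,\dots,n-m$. Since $\nabla^\perp\partial_i=0$ and $|\partial_i|=1$,
\[
S(\partial_i,\partial_i)=-|\partial\mathbb{B}^m|=-|\mathbb{S}^{m-1}|<0 .
\]
The same computation gives $S(\partial_i,\partial_j)=-\delta_{ij}|\mathbb{S}^{m-1}|$, so $S$ is negative definite on their span, an $(n-m)$-dimensional space. Note that, unlike the annulus with $m=2$, there is no inner boundary term to cancel, so this argument works for every $m\geqslant1$. Combining the two bounds gives $\Ind(\mathbb{B}^m)=n-m$.

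The only delicate step is the bookkeeping of the constant mode in the upper bound. One must make sure that the single eigenvalue $\sigma_0=0$ contributes exactly $n-m$ negative directions, not more, and that no non-constant eigenfunction has eigenvalue below $1$. Lemma~\ref{sigma_ball} settles the latter.
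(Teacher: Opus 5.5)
Your proposal is correct and follows the paper's proof essentially step for step. You use the constant normal fields $\partial_1,\dots,\partial_{n-m}$ for the lower bound, and for the upper bound the componentwise decomposition of $S$, the Dirichlet principle and $\Ind_S(\mathbb{B}^m)=1$ from Lemma~\ref{sigma_ball}; your explicit handling of the constant mode $\sigma_0=0$ makes precise a point the paper passes over quickly.
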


\begin{proof}

The second variation of the volume functional for a normal vector field $X$ takes the form:
\begin{equation}
S(X,X) = \int_{\mathbb{B}^m} |\nabla^\perp X|_g^2 \, dv_g - \int_{\partial \mathbb{B}^m} |X|^2 \, ds_g.
\end{equation}

Assume without loss of generality that $\mathbb{B}^m$ lies in the coordinate plane $\Pi = \{x \in \mathbb{E}^n \mid x_1 = \dots = x_{n-m} = 0\}$, so that $\mathbb{B}^m = \mathbb{B}^n \cap \Pi$. The constant normal vector fields $\partial_1, \dots, \partial_{n-m}$ provide volume-decreasing variations. Consequently, the index form $S$ is negative definite on the $(n-m)$-dimensional subspace $\operatorname{span}\{\partial_1, \dots, \partial_{n-m}\}$. This immediately yields the lower bound $\Ind(\mathbb{B}^m) \geqslant n-m$.

To establish the reverse inequality, observe that the normal space at any point $p \in \mathbb{B}^m$ is $N_p\mathbb{B}^m = \operatorname{span}\{(\partial_1)|_p, \dots, (\partial_{n-m})|_p\}$. Thus, any normal vector field $X$ along $\mathbb{B}^m$ can be written as $X =\displaystyle \sum_{i=1}^{n-m} X^i \partial_i$ for some smooth functions $X^i$. Substituting this into the second variation formula, and noting that $|\nabla^\perp X|^2 = \displaystyle\sum_{i=1}^{n-m} |\nabla^g X^i|_g^2$ and $|X|^2 = \displaystyle\sum_{i=1}^{n-m} (X^i)^2$, we obtain the exact decomposition:
\begin{equation}
\label{final_area}
S(X, X) = \sum_{i=1}^{n-m} \left( \int_{\mathbb{B}^m} |\nabla^g X^i|_g^2 \, dv_g - \int_{\partial \mathbb{B}^m} (X^i)^2 \, ds_g \right).
\end{equation}
By the minimization property of the Dirichlet energy, one has
$$
S(X, X) \geqslant \sum_{i=1}^{n-m} S_S(X^i, X^i),
$$ 
which implies the index bound:
\[
\Ind(\mathbb{B}^m) \leqslant (n-m) \Ind_S(\mathbb{B}^m).
\]

By Lemma \ref{sigma_ball}, the first positive Steklov eigenvalue is $\sigma_1(\mathbb{B}^m) = 1$. According to the definition in~\cite{medvedev2023index}, the spectral index counts the number of eigenvalues strictly less than $1$. Since $\sigma_0 = 0$ is the only such eigenvalue (with multiplicity $1$), we have $\Ind_S(\mathbb{B}^m) = 1$. Substituting this into the bound yields $\Ind(\mathbb{B}^m) \leqslant n-m$. 

Combining this with the lower bound, we conclude that $\Ind(\mathbb{B}^m) = n-m$, which is the desired result.

\end{proof}

\subsection{Geodesic Balls in $\mathbb S^n_+$ and $\mathbb H^n$}
The case of geodesic balls in an 
$n$-dimensional upper hemisphere $\mathbb S^n_+$ or hyperbolic space $\mathbb H^n$ was recently considered in \cite{medvedev2025free}. In Proposition 5.6 of the aforementioned paper, it was proved that the spectral index of an $m$-dimensional geodesic ball as a free boundary minimal submanifold of an $n$-dimensional geodesic ball with the same center is one. While the proof closely follows the arguments presented in the current work, we note a technical inaccuracy in the solution of the associated second-order ordinary differential equations. Importantly, this oversight does not affect the validity of the final result, as the proof in \cite[Proposition 5.6]{medvedev2025free} relies solely on the presence of singularities in the solutions rather than their precise explicit form. We explain it in this section. 

Recall that the upper hemisphere $\mathbb{S}^n$ and the hyperbolic space $\mathbb{H}^n$ are realized as subsets of $\mathbb{R}^{n+1}$ with Cartesian coordinates $x_0, x_1, \ldots, x_n$ and are defined by the equations
\[
x_0^2 + \dots + x_n^2 = 1,\ x_0>0 \qquad -x_0^2 + x_1^2 + \dots + x_n^2 = -1, \ x_0 > 0,
\]
respectively. The geodesic ball in which the free boundary minimal submanifold is embedded has radius $r$ and is centered at the point $(1, 0, \ldots, 0)$. Then the coordinate functions of an $m$-dimensional free boundary minimal submanifold $\Sigma$ satisfy the following~\textit{Robin problem}:

\begin{align*}
\text{In } \mathbb{S}^n_+: \qquad &
\begin{cases}
\Delta_g \phi_i = m \phi_i & \text{in } \Sigma, \ i=0,1,\ldots,n, \\[6pt]
\dfrac{\partial \phi_0}{\partial\eta} = -(\tan r)\phi_0 & \text{on } \partial\Sigma, \\[6pt]
\dfrac{\partial \phi_i}{\partial\eta} = (\cot r)\phi_i & \text{on } \partial\Sigma, \ i=1,\ldots,n,
\end{cases}
\\[12pt]
\text{In } \mathbb{H}^n: \qquad &
\begin{cases}
\Delta_g \phi_i = -m \phi_i & \text{in } \Sigma, \ i=0,1,\ldots,n, \\[6pt]
\dfrac{\partial \phi_0}{\partial\eta} = (\tanh r)\phi_0 & \text{on } \partial\Sigma, \\[6pt]
\dfrac{\partial \phi_i}{\partial\eta} = (\coth r)\phi_i & \text{on } \partial\Sigma, \ i=1,\ldots,n.
\end{cases}
\end{align*}

Consider the case of the hemisphere $\mathbb{S}^n_+$. In \cite[Proposition 5.6]{medvedev2025free}, two second-order ordinary differential equations are derived for the radial components of the $\sigma_0$- and $\sigma_1$-eigenfunctions, respectively (obtained analogously to Lemmas~\ref{sigma_1} and~\ref{sigma_ball}):
\begin{align}
    a_0''(t) + (m-1)(\cot t)\,a_0'(t) + m\,a_0(t) &= 0, \label{eq:a0} \\
    a_{1i}''(t) + (m-1)(\cot t)\,a_{1i}'(t) + \left(m - \frac{m-1}{\sin^2 t}\right)a_{1i}(t) &= 0. \label{eq:a1i}
\end{align}

Applying the substitutions $a_0(t) = z_0(t)\cos t$ and $a_{1i}(t) = z_{1i}(t)\sin t$, we obtain the following reduced equations:
\begin{equation}
\label{eq:z0}
    (\cos t) z_0''(t) + \left( -2 \sin t + \frac{(m-1) \cos^2 t}{\sin t} \right) z_0'(t) = 0,
\end{equation}
and
\begin{equation}
\label{eq:z1i}
    (\sin t) z_{1i}''(t) + (m+1) \cos t \, z_{1i}'(t) = 0.
\end{equation}

The generalized hypergeometric function is defined by the series:
\begin{align*}
{}_pF_q(a_1,\ldots,a_p; b_1,\ldots,b_q; z) &= \sum_{n=0}^{\infty} \frac{(a_1)_n \cdots (a_p)_n}{(b_1)_n \cdots (b_q)_n} \frac{z^n}{n!} \\
&= 1 + \frac{a_1 \cdots a_p}{b_1 \cdots b_q} z + \frac{a_1(a_1+1) \cdots a_p(a_p+1)}{b_1(b_1+1) \cdots b_q(b_q+1)} \frac{z^2}{2!} + \cdots,
\end{align*}
where $(a)_n = a(a+1)\cdots(a+n-1)$ is the Pochhammer symbol. 

Consequently, for even values of $m$, the solutions presented in \cite[Proposition 5.6]{medvedev2025free} become undefined due to singularities in the parameters of the hypergeometric functions:
\begin{equation*}
z_0(t) = C_1 \frac{\sin^{2-m} t}{2 - m} \, {}_2F_1\left(\frac{3}{2}, 1 - \frac{m}{2}; 2 - \frac{m}{2}; \sin^2 t\right) + C_2,
\end{equation*}
and
\begin{equation*}
z_{1i}(t) = C_1 \frac{\sin^{-m} t}{m} \, {}_2F_1\left(\frac{1}{2}, -\frac{m}{2}; \frac{2-m}{2}; \sin^2 t\right) + C_2.
\end{equation*}

Instead, the general solution to the first equation \eqref{eq:z0} can be robustly expressed in terms of associated Legendre functions, which remain well-defined for all $m$:
\begin{align*}
z_0(t) &= c_1 \left(1 - \cos^2 t\right)^{\frac{2-m}{4}} P_{\frac{m}{2}}^{\frac{m-2}{2}}(\cos t) \sec t \\
&\quad + c_2 \left(1 - \cos^2 t\right)^{\frac{2-m}{4}} Q_{\frac{m}{2}}^{\frac{m-2}{2}}(\cos t) \sec t.
\end{align*}

Similarly, the general solution for the second equation \eqref{eq:a1i} can be expressed as:
\[
a_{1i}(t) = c_1 \left(1 - \cos^2 t\right)^{\frac{2-m}{4}} P_{\frac{m}{2}}^{\frac{m}{2}}(\cos t) + c_2 \left(1 - \cos^2 t\right)^{\frac{2-m}{4}} Q_{\frac{m}{2}}^{\frac{m}{2}}(\cos t),
\]
where $P_l^\mu$ and $Q_l^\mu$ denote the associated Legendre functions of the first and second kind, respectively. Unlike the hypergeometric representation, these functions do not exhibit singularities for integer values of $m$.

An analogous observation holds for the hyperbolic case.

 \bibliography{mybib}
\bibliographystyle{alpha}
\end{document}